\newcommand{\A}{\mathbb{A}}
\newcommand{\C}{\mathbb{C}}
\newcommand{\G}{\mathbb{G}}
\newcommand{\HH}{\mathbb{H}}
\newcommand{\bP}{\mathbb{P}}
\newcommand{\Q}{\mathbb{Q}}
\newcommand{\R}{\mathbb{R}}
\newcommand{\Z}{\mathbb{Z}}
\newcommand{\Ag}{\mathcal{A}_g}
\newcommand{\Fg}{\mathcal{F}_g}
\newcommand{\Hg}{\mathcal{H}_g}
\newcommand{\cA}{\mathcal{A}}
\newcommand{\cB}{\mathcal{B}}
\newcommand{\cN}{\mathcal{N}}
\newcommand{\cW}{\mathcal{W}}
\newcommand{\fo}{\mathfrak{o}}
\newcommand{\rM}{\mathrm{M}}
\DeclareMathOperator{\End}{End}
\DeclareMathOperator{\Gal}{Gal}
\DeclareMathOperator{\GL}{GL}
\DeclareMathOperator{\GSp}{GSp}
\DeclareMathOperator{\Hom}{Hom}
\DeclareMathOperator{\Nm}{N}
\DeclareMathOperator{\Nrd}{Nrd}
\DeclareMathOperator{\Res}{Res}
\DeclareMathOperator{\Sh}{Sh}
\DeclareMathOperator{\Sp}{Sp}
\DeclareMathOperator{\Spec}{Spec}
\renewcommand{\Im}{\mathop{\mathrm{Im}}}
\newcommand{\abs}[1]{\left\lvert #1 \right\rvert}
\newcommand{\ad}{\mathrm{ad}}
\newcommand{\ca}{\mathrm{ca}}
\newcommand{\fullsmallmatrix}[4]{ \bigl( \begin{smallmatrix} #1 & #2 \\ #3 & #4 \end{smallmatrix} \bigr) }
\newcommand{\defterm}{\textbf}
\newtheorem{lemma}{Lemma}[section]
\newtheorem{proposition}[lemma]{Proposition}
\newtheorem{theorem}[lemma]{Theorem}
\newtheorem{conjecture}[lemma]{Conjecture}
\Crefname{conjecture}{Conjecture}{Conjectures} % Work around bug in cleveref
\newcommand{\tp}[1]{{}^t {#1}}
\newcommand{\Ztil}{\widetilde{Z}}
\newcommand{\Ltil}{\widetilde{\Lambda}}
\newcommand{\stil}{\tilde{s}}
\newcommand{\ttil}{\tilde{t}}
\newcommand{\util}{\tilde{u}}
\author{Martin Orr}
\address{Université Paris-Sud 11\\ Bat.\ 425\\ 91400 Orsay\\ France}
\email{martin.orr@math.u-psud.fr}
\title[Families of abelian varieties]{Families of abelian varieties with many isogenous fibres}
\subjclass[2010]{11G18, 14K02}
\begin{document}

\begin{abstract}
Let \( Z \) be a subvariety of the moduli space of principally polarised abelian varieties of dimension~\( g \) over the complex numbers.
Suppose that \( Z \) contains a Zariski dense set of points which correspond to abelian varieties from a single isogeny class.
A generalisation of a conjecture of André and Pink predicts that \( Z \) is a weakly special subvariety.
We prove this when \( \dim Z = 1 \) using the Pila--Zannier method and the Masser--Wüstholz isogeny theorem.
This generalises results of Edixhoven and Yafaev when the Hecke orbit consists of CM points and of Pink when it consists of Galois generic points.
\end{abstract}

\maketitle

\section{Introduction}

Let \( \Ag \) denote the Siegel moduli space of principally polarised abelian varieties of dimension~\( g \).
We consider the following conjecture.
In particular we prove the conjecture when \( Z \) is a curve, and make some progress on higher--dimensional cases.

\begin{conjecture} \label{main-conj}
Let \( \Lambda \) be the isogeny class of a point \( s \in \Ag(\C) \).
Let \( Z \) be an irreducible closed subvariety of \( \Ag \) such that \( Z \cap \Lambda \) is Zariski dense in \( Z \).
Then \( Z \) is a weakly special subvariety of \( \Ag \).
\end{conjecture}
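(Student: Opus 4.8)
To prove the conjecture when $\dim Z = 1$, the plan is to run the Pila--Zannier strategy on the uniformisation $\pi \colon \Hg \to \Ag = \Gamma \backslash \Hg$ with $\Gamma = \Sp_{2g}(\Z)$, playing a lower bound for Galois orbits (extracted from the Masser--W\"ustholz isogeny theorem) against the Pila--Wilkie counting theorem and the Ax--Lindemann theorem for $\Ag$. After a reduction --- not entirely routine, since the hypothesis on $\Lambda$ is transcendental in nature --- I may assume $s$ and $Z$ are defined over a number field $K_0$; then $A_s$ is an abelian variety over $\overline{\Q}$, and since being defined over $\overline{\Q}$ is an isogeny invariant, every point of $\Lambda$ lies in $\Ag(\overline{\Q})$. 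For $t \in Z \cap \Lambda$ write $d(t)$ for the least degree of an isogeny $A_s \to A_t$; as there are only finitely many abelian varieties joined to $A_s$ by an isogeny of bounded degree, and $Z \cap \Lambda$ is Zariski dense in the curve $Z$ and hence infinite, $d(t)$ is unbounded on $Z \cap \Lambda$.

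The arithmetic heart of the matter is the inequality $[\Q(t):K_0] \gg d(t)^{\delta}$ for some $\delta = \delta(g) > 0$ and all $t \in Z \cap \Lambda$. Indeed $A_t$, with its principal polarisation, has a model over a number field $F$ with $[F:\Q] \ll_g [\Q(t):\Q]$; one may enlarge $F$ so that $A_s$ is also defined over it without changing $[F:\Q]$ by more than the fixed factor $[K_0:\Q]$; the stable Faltings height of $A_s$ is fixed; so the Masser--W\"ustholz theorem yields an isogeny $A_s \to A_t$ of degree $\ll_{A_s} [F:\Q]^{\kappa} \ll [\Q(t):\Q]^{\kappa}$, whence $d(t) \ll_{K_0} [\Q(t):K_0]^{\kappa}$ and $\delta = 1/\kappa$ works. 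Furthermore $d$ is $\Gal(\overline{\Q}/K_0)$-invariant, since $A_s$ is defined over $K_0$, so a minimal isogeny $A_s \to A_t$ is carried by $\sigma$ to a minimal isogeny $A_s \to A_{\sigma(t)}$ of the same degree. Obtaining the required uniformity here --- keeping track of fields of definition of abelian varieties as against those of moduli points, and of Faltings heights within an isogeny class --- is where I expect the real difficulty to lie.

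On the geometric side, fix a lift $\stil \in \Hg$ of $s$ and a semialgebraic fundamental set $\Fg$ on which $\pi$ is definable in $\R_{\mathrm{an},\exp}$, and form the definable set
\[
\mathcal{D} = \{\, g \in \GSp_{2g}(\R)^{+} : g\stil \in \Fg \text{ and } \pi(g\stil) \in Z \,\}.
\]
A point $t \in Z \cap \Lambda$ with $d(t) = d$ gives, from a minimal isogeny $A_s \to A_t$ followed by a $\Gamma$-translation moving the resulting point into $\Fg$, a rational point $g \in \mathcal{D}$ with $\pi(g\stil) = t$ of polynomially bounded complexity $H(g) \ll d^{\kappa_1}$; proving this bound, which amounts to comparing the height of a Hecke correspondence to the degree of the associated isogeny inside a fixed fundamental set, is the remaining technical input. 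Applied to a $t$ with $d(t) = d$ large together with all $\gg d^{\delta}$ of its $\Gal(\overline{\Q}/K_0)$-conjugates, which again lie in $Z \cap \Lambda$ with the same value of $d$, this produces $\gg d^{\delta}$ distinct rational points of $\mathcal{D}$ of height at most $T := c\,d^{\kappa_1}$, that is $\gg T^{\delta/\kappa_1}$ of them.

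The Pila--Wilkie theorem, in its block form, covers the rational points of $\mathcal{D}$ of height at most $T$ by $O_{\epsilon}(T^{\epsilon})$ connected positive-dimensional semialgebraic blocks, drawn from a fixed definable family and each contained in $\mathcal{D}$. A block on which $g \mapsto g\stil$ is constant can contain at most one of our points, since these have pairwise distinct images $\pi(g\stil)$; so for $\epsilon < \delta/\kappa_1$ and $d$ large, pigeonhole yields a block $B \subseteq \mathcal{D}$ with $B\stil$ positive-dimensional. Then $B\stil$ is a positive-dimensional semialgebraic subset of $\pi^{-1}(Z)$; being connected it lies in one irreducible analytic component $\Ztil$ of $\pi^{-1}(Z)$, and since $\dim Z = 1$, a positive-dimensional semialgebraic subset of the irreducible one-dimensional analytic curve $\Ztil$ forces $\Ztil$ itself to be an algebraic subvariety of $\Hg$. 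Thus $\Ztil$ is bi-algebraic --- an algebraic subvariety of $\Hg$ whose image $\pi(\Ztil) = Z$ is algebraic --- and the Ax--Lindemann theorem for $\Ag$ then shows that $Z = \pi(\Ztil)$ is weakly special, as required.
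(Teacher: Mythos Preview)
Your outline follows the paper's Pila--Zannier strategy closely, but there is a genuine gap at the step where you pass from a minimal isogeny to a rational point of your definable set $\mathcal{D} \subset \GSp_{2g}(\R)^+$. The rational representation of an isogeny $A_s \to A_t$ lies in $\GSp_{2g}(\Q)$ only when the isogeny is \emph{polarised}, i.e.\ pulls $\lambda_t$ back to an integer multiple of $\lambda_s$. In general the minimal-degree isogeny is not polarised (this can happen even when $\End A_s = \Z$ fails mildly), and since $\Lambda$ is the full isogeny class rather than the Hecke orbit, some $t \in \Lambda$ may admit no polarised isogeny from $A_s$ at all --- so there is simply no $g \in \GSp_{2g}(\Q)$ with $\pi(g\stil) = t$. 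Your height bound $H(g) \ll d(t)^{\kappa_1}$ therefore has no obvious meaning.

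The paper's fix is to replace $\GSp_{2g}$ by $\GL_{2g}$: one defines $\sigma\bigl(\begin{smallmatrix}A&B\\C&D\end{smallmatrix}\bigr) = (A\stil+B)(C\stil+D)^{-1}$ on a Zariski-open subset of $\GL_{2g}(\R)$ and sets $Y = \sigma^{-1}(\Ztil)$. The rational representation of any isogeny now lands in $\GL_{2g}(\Q)$, but the height bound becomes nontrivial: one must produce an isogeny $f \colon A_t \to A_s$ and a \emph{symplectic} basis of $H_1(A_t,\Z)$ with respect to which the matrix of $f$ has height polynomial in $d(t)$. This is the content of the paper's Proposition~4.1, whose proof rests on a reduction-theory argument in the semisimple algebra $\End(A_s)\otimes\Q$ (Proposition~4.2) showing that the endomorphism $q$ with $h^*\lambda_t = \lambda_s \circ q$ can be brought, by a unit of $\End A_s$, to have height bounded linearly in its norm. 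This is the ``remaining technical input'' you allude to, and it is substantially more than comparing Hecke degrees to isogeny degrees.

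Two smaller points. First, rather than reducing to $\bar\Q$ --- which, as you note, is delicate --- the paper instead extends Masser--W\"ustholz to finitely generated fields of characteristic~$0$ by a specialisation argument (choosing a closed point where $\End A$ does not jump). Second, for $\dim Z = 1$ the paper does not invoke Ax--Lindemann: once a positive-dimensional block sits inside $\Ztil$, an irreducible component of $\pi^{-1}(Z)$ is algebraic, and the Ullmo--Yafaev characterisation of weakly special subvarieties finishes the argument directly.
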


\begin{theorem} \label{main-thm-curves}
\Cref{main-conj} holds when \( Z \) is a curve.
\end{theorem}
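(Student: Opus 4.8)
The plan is to run the Pila--Zannier method. Let $\pi\colon\mathfrak H_g\to\Ag(\C)$ be the uniformisation, with $\Ag=\Sp_{2g}(\Z)\backslash\mathfrak H_g$, and fix a lift $\tilde s\in\mathfrak H_g$ of $s$. The isogeny class is the Hecke orbit $\Lambda=\pi\bigl(\GSp_{2g}(\Q)^+\cdot\tilde s\bigr)$, so every $t\in Z\cap\Lambda$ can be written $t=\pi(g_t\tilde s)$ with $g_t\in\GSp_{2g}(\Q)^+$, which after scaling is a primitive integral matrix whose similitude factor is, up to fixed powers, the degree of an isogeny $A_s\to A_t$. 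Since $Z$ is a curve, the hypothesis just says $Z\cap\Lambda$ is infinite. Replacing $s$ by a point of $Z\cap\Lambda$ we may assume $s\in Z$, so that both $Z$ and the fibre $A_s$ are defined over a finitely generated field $k\subseteq\C$; any abelian variety isogenous to $A_s$ is then already defined over $\overline k$ (its kernel lies in a torsion subgroup scheme), whence $Z\cap\Lambda\subseteq Z(\overline k)$ and $\Gal(\overline k/k)$ acts on it. A further reduction --- delicate, since one must preserve infinitude of $Z\cap\Lambda$, and absent from the CM and Galois-generic cases treated by Edixhoven--Yafaev and Pink --- brings us to a setting in which the Masser--Wüstholz isogeny theorem and height estimates on $\Ag$ are available, e.g.\ with $k$ a number field.

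Next I would bound the arithmetic complexity of $Z\cap\Lambda$. For $t\in Z\cap\Lambda$ every conjugate $t^\sigma$ over $k$ again lies in $Z\cap\Lambda$, because $A_{t^\sigma}=A_t^\sigma$ is isogenous to $A_s^\sigma=A_s$. By Masser--Wüstholz there is an isogeny $A_s\to A_t$ of degree $N_t$ bounded by a fixed power of $\max\bigl(1,h_{\mathrm F}(A_s),[k(t):k]\bigr)$; comparing the Faltings height of the fibre $A_t$ with a height on $\Ag$, using $\bigl|h_{\mathrm F}(A_t)-h_{\mathrm F}(A_s)\bigr|\le\tfrac12\log N_t$, and noting that this same height controls the distance of $t$ to the cusps, one finds that $g_t$ --- and simultaneously the matrices $g_{t^\sigma}$ realising the $d_t:=[k(t):k]$ conjugates --- can be chosen with lifts inside the Siegel fundamental set $\Fg$ and of height polynomially bounded in $d_t$. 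A pigeonhole argument gives $\sup_t d_t=\infty$: were the $d_t$ bounded, the $A_t$ would all be quotients of $A_s$ by subgroup schemes of bounded order, so $Z\cap\Lambda$ would be finite.

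The counting step is then routine. By Peterzil--Starchenko, $\Fg$ is definable in $\R_{\mathrm{an},\exp}$ and $\pi|_{\Fg}$ is definable, so $\mathcal Y:=\{\,g\in\mathrm{Mat}_{2g}(\R):g\tilde s\in\Fg,\ \pi(g\tilde s)\in Z\,\}$ is definable. For $t$ with $d_t$ large, the $d_t$ matrices $g_{t^\sigma}$ are rational points of $\mathcal Y$ of height at most $T$ with $T$ a fixed power of $d_t$, hence are $\gg T^\delta$ in number for a fixed $\delta>0$; the Pila--Wilkie theorem produces a connected, positive-dimensional semialgebraic $W\subseteq\mathcal Y$. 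As $W$ carries many rational points while the stabiliser of $\tilde s$ in $\GSp_{2g}$ carries essentially none, $W\cdot\tilde s$ is a positive-dimensional semialgebraic subset of the one-dimensional complex analytic set $\pi^{-1}(Z)$; therefore an irreducible component of $\pi^{-1}(Z)$ is complex-algebraic, and by Ax--Lindemann for $\Ag$ (Pila--Tsimerman) --- equivalently, by the characterisation of weakly special subvarieties through the algebraic monodromy group --- the Zariski closure of the image of this component under $\pi$, which is all of $Z$, is weakly special.

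The main obstacle is the arithmetic heart of the second paragraph, together with the reduction at the end of the first. The o-minimal definability is quoted from Peterzil--Starchenko, and both the Pila--Wilkie count and the functional-transcendence endgame are essentially black boxes once the complexity bounds are in place. What requires real work is (a) carrying out the reduction so that it preserves the infinitude of $Z\cap\Lambda$ and lands in a field over which the isogeny theorem applies, and (b) converting the Masser--Wüstholz bound into a clean polynomial bound on the heights of the matrices $g_{t^\sigma}$ --- which forces one to compare Faltings heights of the fibres with a height on $\Ag$, to control the growth of that height under isogeny, and to keep the points of $Z\cap\Lambda$ and their conjugates from drifting into the cusps faster than polynomially.
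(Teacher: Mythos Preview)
Your overall Pila--Zannier architecture is right, but there is a genuine gap at the very first step: the identification $\Lambda=\pi\bigl(\GSp_{2g}(\Q)^+\cdot\tilde s\bigr)$ is false. That set is the Hecke orbit, and the paper's conjecture is stated for the full isogeny class, which is in general strictly larger --- two principally polarised abelian varieties can be isogenous without admitting any \emph{polarised} isogeny between them. Even restricting to the Hecke orbit does not save you: the Masser--W\"ustholz theorem controls the degree of the minimum-degree isogeny, and that isogeny need not be polarised, so its rational representation need not lie in $\GSp_{2g}(\Q)$. Thus you cannot guarantee a matrix $g_t\in\GSp_{2g}(\Q)^+$ of height polynomially bounded in the complexity. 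The paper's fix is to abandon $\GSp_{2g}$ entirely and work in $\GL_{2g}$: the map $\sigma\bigl(\begin{smallmatrix}A&B\\C&D\end{smallmatrix}\bigr)=(A\tilde s+B)(C\tilde s+D)^{-1}$ makes sense on a Zariski-open subset of $\GL_{2g}(\R)$, and one takes the definable set $Y=\sigma^{-1}(\tilde Z)$ there. The price is that bounding the height of the rational representation of an arbitrary isogeny in terms of its degree, \emph{with respect to symplectic bases on both sides}, becomes nontrivial; the paper does this by a reduction-theory argument inside the endomorphism algebra $(\End A_s)\otimes\R$ (choosing a unit $u$ so that $u^\dag qu$ has small height, where $h^*\lambda'=\lambda\circ q$), rather than via Faltings heights.

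Two further divergences are worth noting. First, your ``delicate reduction'' to number fields is replaced in the paper by the opposite move: a specialisation argument extends the Masser--W\"ustholz bound itself to finitely generated fields of characteristic~$0$, so no reduction is needed. Second, for the curve case you do not need Ax--Lindemann at all: once Pila--Wilkie (in the block form) and the Galois lower bound produce a positive-dimensional semialgebraic piece inside $\tilde Z$, the fact that $\dim Z=1$ forces an entire irreducible component of $\pi^{-1}(Z)$ to be algebraic, and the Ullmo--Yafaev characterisation of weakly special subvarieties finishes the argument directly. The paper explicitly reserves Ax--Lindemann for the higher-dimensional theorem.
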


\begin{theorem} \label{main-thm-higher}
Let \( \Lambda \) be the isogeny class of a point \( s \in \Ag(\C) \).
Let \( Z \) be an irreducible closed subvariety of \( \Ag \) such that \( Z \cap \Lambda \) is Zariski dense in \( Z \).

Then there is a special subvariety \( S \subset \Ag \) which is isomorphic to a product of Shimura varieties \( S_1 \times S_2 \) with \( \dim S_1 > 0 \), and such that 
\[ Z = S_1 \times Z' \subset S \]
for some irreducible closed subvariety \( Z' \subset S_2 \).
\end{theorem}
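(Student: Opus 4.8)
The plan is to run the Pila--Zannier method on the whole of $Z$, not merely on curves: from the density of $Z\cap\Lambda$ we produce a Zariski-dense family of positive-dimensional weakly special subvarieties of $\Ag$ contained in $Z$, and then organise this family into parallel slices of a single special subvariety, which forces the product decomposition. As a preliminary reduction, a standard spreading-out and specialisation argument lets us assume that $s$ and $Z$ are defined over a number field $K$ with $s\in Z(K)$. Fix the uniformisation $\pi\colon\HH_g\to\Ag=\Sp_{2g}(\Z)\backslash\HH_g$, a Siegel fundamental domain $\mathcal F$ for $\Sp_{2g}(\Z)$, and a lift $\tilde s\in\mathcal F$ of $s$. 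Since Hecke correspondences realise isogenies and any unimodular symplectic lattice of rank $2g$ is isomorphic to the standard one, every point of $\Lambda$ has the form $\pi(q\tilde s)$ with $q\in\GSp_{2g}(\Q)^+$, and for $t\in Z\cap\Lambda$ we may take such a $q$ with $q\tilde s\in\Ztil\cap\mathcal F$, where $\Ztil=\pi^{-1}(Z)$. By Peterzil--Starchenko the map $\pi|_{\mathcal F}$ is definable in $\R_{\mathrm{an},\exp}$, so $\mathcal D:=\{\,q\in\GSp_{2g}(\Q)^+ : q\tilde s\in\Ztil\cap\mathcal F\,\}$ is the set of rational points of a definable subset of $\GSp_{2g}(\R)^+$.

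The arithmetic input is a polynomial bound on the height of such a $q$ in terms of $\deg(t):=[\Q(t):\Q]$. Putting $A_s$ and $A_t$ over a common number field of degree $\ll\deg(t)$, the Masser--Wüstholz isogeny theorem yields an isogeny $A_s\to A_t$ of degree $\ll_{A_s}\deg(t)^\kappa$; this bounds the entries of a representing matrix $q_0\in\GSp_{2g}(\Q)^+$, and translating $q_0\tilde s$ back into $\mathcal F$ by a $\gamma\in\Sp_{2g}(\Z)$ costs only a further polynomial factor, because the Faltings height of $A_t$ is $\ll\log\deg(t)$ (again by Masser--Wüstholz, the isogeny degree being polynomially bounded), and this controls the size of the reduced period matrix of $A_t$ and hence the height of $\gamma$. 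Thus for every $t\in Z\cap\Lambda$ there is $q\in\mathcal D$ with $\pi(q\tilde s)=t$ and $H(q)\ll\deg(t)^C$. Because $s$ and $Z$ are defined over $K$, the Galois orbit of $t$ lies in $Z\cap\Lambda$, has cardinality $\gg\deg(t)$, and its elements give $\gg\deg(t)$ distinct points of $\mathcal D$ of height $\ll\deg(t)^C$. Finally $\deg(t)$ is unbounded on $Z\cap\Lambda$: otherwise, fixing $t_0\in Z\cap\Lambda$, Masser--Wüstholz would bound the degree of an isogeny $A_{t_0}\to A_t$ uniformly, so each $A_t$ would be a quotient of $A_{t_0}$ by one of finitely many finite subgroups, making $Z\cap\Lambda$ finite and contradicting Zariski-density (here $\dim Z\ge 1$).

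Now apply the Pila--Wilkie theorem to $\mathcal D$: choosing $\epsilon<1/C$, the lower bound $\gg\deg(t)$ on rational points of height $\ll\deg(t)^C$ exceeds the Pila--Wilkie bound $O_\epsilon(\deg(t)^{C\epsilon})$ for $t$ of large degree, so $\mathcal D$ contains a connected positive-dimensional semialgebraic block $B$; by the uniform version of Pila--Wilkie we may take $B$ from one of finitely many definable families and passing through one of the relevant points $q\tilde s$, and by choosing it to meet infinitely many cosets of the stabiliser of $\tilde s$ we arrange that $B\tilde s\subset\Ztil$ is positive-dimensional. As $\Ztil$ is complex analytic and $B$ semialgebraic, $B\tilde s$ lies in a positive-dimensional algebraic subvariety $V$ of $\Ztil$, and by the Ax--Lindemann theorem for $\Ag$ (Pila--Tsimerman) $W:=\overline{\pi(V)}$ is a positive-dimensional weakly special subvariety of $\Ag$ contained in $Z$. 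Carrying this out for a Zariski-dense set of $t$, and using that only finitely many families occur, we obtain a Zariski-dense subset of points of $Z$ each lying on a positive-dimensional weakly special subvariety contained in $Z$, these being of finitely many types and all meeting the single Hecke orbit of $s$. By irreducibility of $Z$, one of these types occurs for a Zariski-dense set of $t$; writing the corresponding smallest special subvariety as $S=S_1\times S_2$, the associated weakly special subvarieties are exactly the fibres $S_1\times\{\ast\}$ of $\mathrm{pr}_2\colon S\to S_2$ and $\dim S_1>0$. Since a Zariski-dense set of such fibres lies in $Z$, taking closures gives $Z=\mathrm{pr}_2^{-1}\bigl(\overline{\mathrm{pr}_2(Z)}\bigr)=S_1\times Z'$ with $Z'=\overline{\mathrm{pr}_2(Z)}\subset S_2$, which is the assertion.

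The step I expect to be the main obstacle is the uniform polynomial height bound: producing, for each $t\in Z\cap\Lambda$, an element $q\in\GSp_{2g}(\Q)^+$ which represents an isogeny $s\sim t$, lies in $\Ztil\cap\mathcal F$, and satisfies $H(q)\ll\deg(t)^C$. This rests on the quantitative Masser--Wüstholz isogeny theorem, on effective control of the Faltings height throughout an isogeny class, and --- the most delicate ingredient --- on an estimate bounding the element of $\Sp_{2g}(\Z)$ needed to return a Hecke translate of $\tilde s$ to the fundamental domain, in terms of the geometry of the Siegel domain. A secondary difficulty, of a more geometric nature, is the last step: ensuring that the weakly special subvarieties delivered by Pila--Wilkie genuinely assemble into parallel fibres of one special subvariety; here one uses the boundedness of types from the uniform Pila--Wilkie theorem together with the fact that all of them meet the single Hecke orbit of $s$.
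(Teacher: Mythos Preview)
Your overall Pila--Zannier strategy matches the paper's, but there is a genuine gap precisely at the step you flagged as the main obstacle: producing a representing matrix in $\GSp_{2g}(\Q)^+$ of polynomially bounded height.

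The problem is twofold. First, the isogeny class $\Lambda$ is in general strictly larger than the Hecke orbit of $s$: two principally polarised abelian varieties can be isogenous without admitting any \emph{polarised} isogeny between them. So your claim that every $t\in\Lambda$ has the form $\pi(q\stil)$ with $q\in\GSp_{2g}(\Q)^+$ is false in general. (The fact that unimodular symplectic lattices of rank $2g$ are all isomorphic gives an isomorphism of symplectic $\Z$-modules, but not one arising from an isogeny of the abelian varieties.) Second, and more seriously, even restricting to the Hecke orbit does not rescue the height bound. Masser--W\"ustholz produces an isogeny $A_s\to A_t$ of degree $\ll[L:K]^\kappa$, but this isogeny need not be polarised, so its rational representation lies in $\GL_{2g}(\Q)$ and not in $\GSp_{2g}(\Q)$. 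When $\End A_s\neq\Z$ there may be polarised isogenies $A_s\to A_t$ of degree arbitrarily large compared with the minimum isogeny degree (compose with units in a real quadratic endomorphism ring), so one cannot simply upgrade the Masser--W\"ustholz isogeny to a polarised one of comparable height.

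The paper's remedy is to abandon $\GSp_{2g}$ altogether and work in $\GL_{2g}(\R)$: the map $\sigma\bigl(\begin{smallmatrix}A&B\\C&D\end{smallmatrix}\bigr)=(A\stil+B)(C\stil+D)^{-1}$ is defined semialgebraically on a Zariski-open subset of $\GL_{2g}(\R)$, one takes $Y=\sigma^{-1}(\Ztil)$, and the rational representation of \emph{any} isogeny (after a $\Sp_{2g}(\Z)$-correction) gives a rational point of $Y$. The new difficulty is then to bound the height of this rational representation with respect to \emph{symplectic} bases on both sides, and this is the substance of Section~4: a reduction-theory argument in $(\End A_s)^\times$ is needed to tame the pulled-back polarisation $h^*\lambda'=\lambda\circ q$ up to the action $q\mapsto u^\dag qu$, after which one constructs a symplectic basis of controlled height.

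Two smaller points. Your preliminary reduction by specialisation to a number field is not obviously valid: you would need to know that Zariski-density of $Z\cap\Lambda$ survives specialisation, which is delicate. The paper instead extends the Masser--W\"ustholz theorem itself to finitely generated fields of characteristic zero. And for the final step, rather than organising the weakly special subvarieties by hand into ``types'' (your argument here is sketchy and the finiteness of block families does not directly give finitely many types of weakly special subvarieties), the paper simply quotes a theorem of Ullmo: once positive-dimensional weakly special subvarieties are Zariski-dense in $Z$, the smallest special subvariety $S$ containing $Z$ splits as $S_1\times S_2$ with $Z=S_1\times Z'$.
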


\Cref{main-thm-higher}, but not \cref{main-thm-curves}, depends on results concerning the hyperbolic Ax--Lindemann conjecture from recent preprints of Pila and Tsimerman~\cite{pila-tsimerman:ax-lindemann} and of Ullmo~\cite{ullmo:hyp-ax-lind}.

\Cref{main-conj} is a consequence of the Zilber--Pink conjecture on subvarieties of Shimura varieties~\cite{pink:zp}.
For a statement of the Zilber--Pink conjecture and proof that it implies \cref{main-conj}, see section~\ref{sec:shimura-vars}.

\Cref{main-conj} is slightly more general than the \( S = \Ag \) case of the following conjecture of André and Pink, because the isogeny class of \( s \in \Ag(\C) \) is sometimes bigger than the Hecke orbit:
by \defterm{isogeny class} we mean the set of points \( t \in \Ag(\C) \) such that the corresponding abelian variety \( A_t \) is isogenous to \( A_s \), with no condition of compatibility between isogeny and polarisations.
On the other hand the \defterm{Hecke orbit} consists of those points for which there is a \defterm{polarised isogeny} between the principally polarised abelian varieties -- that is, an isogeny \( \phi \colon A_s \to A_t \) satisfying \( \phi^* \lambda_t \in \Z\lambda_s \), where \( \lambda_s \) and \( \lambda_t \) are the polarisations.
In the case of \( \Ag \) there is no difference between Hecke orbits and Pink's generalised Hecke orbits.

\begin{conjecture} \label{pink-conj}
[\cite{andre:g-functions}~Chapter~X Problem~3, \cite{pink:conj}~Conjecture~1.6]
Let \( S \) be a mixed Shimura variety over \( \C \) and \( \Lambda \subset S \) the generalised Hecke orbit of a point \( s \in S \).
Let \( Z \subset S \) be an irreducible closed algebraic subvariety such that \( Z \cap \Lambda \) is Zariski dense in \( Z \).
Then \( Z \) is a weakly special subvariety of \( S \).
\end{conjecture}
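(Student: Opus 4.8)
We outline the strategy by which one would attack \cref{pink-conj}; it is the natural mixed and higher--dimensional extension of the Pila--Zannier argument that yields \cref{main-thm-curves,main-thm-higher}. Fix a connected mixed Shimura datum \( (P,\mathcal{X}) \) with \( S \) a connected component of \( \Gamma\backslash\mathcal{X} \), and let \( \pi\colon\mathcal{X}\to S \) be the uniformisation; here \( \mathcal{X} \) is a semialgebraic (real--analytic) open subset of some \( \C^N \), built from a Siegel upper half space together with a unipotent fibre, and by the definability results for period maps of Klingler--Ullmo--Yafaev in the pure case and Gao in the mixed case, the restriction of \( \pi \) to a suitable fundamental set \( \mathcal{F} \subset \mathcal{X} \) is definable in the o-minimal structure \( \R_{\mathrm{an},\exp} \). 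Choosing a lift \( \tilde{s}\in\mathcal{X} \) of \( s \), the generalised Hecke orbit is \( \Lambda = \pi\bigl(P(\Q)_{+}\cdot\tilde{s}\bigr) \), and to each \( t\in\Lambda \) one attaches a positive integer \defterm{complexity} \( \Delta(t) \) measuring the size of the rational group element joining \( t \) to \( s \) (on the abelian part of the fibre this is the degree of the minimal isogeny).

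The two analytic--arithmetic inputs to establish are: (i) an \emph{upper bound}, that every \( t\in\Lambda \) has a preimage \( \tilde{t}\in\mathcal{F} \) of archimedean size polynomial in \( \Delta(t) \); and (ii) a \emph{lower bound}, that after descending \( s \) and \( Z \) to a number field \( K \) one has \( [K(t):K]\gg\Delta(t)^{\delta} \) for some \( \delta>0 \). Input (i) is the mixed analogue of the fundamental--domain height estimate and follows by combining reduction theory for \( \GSp_{2g} \) (as used for \cref{main-thm-curves}) with an explicit bound in the unipotent fibre. Input (ii) decomposes along the mixed structure: on the abelian/semi-abelian part of the fibre it is an isogeny estimate coming from the Masser--Wüstholz isogeny theorem, exactly as in \cref{main-thm-curves}, while on the pure part it is precisely the ``large Galois orbits'' statement, which is in general still open. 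Feeding (i) and (ii) into the Pila--Wilkie counting theorem applied to \( \Ztil\cap\mathcal{F}\subset\C^N \), where \( \Ztil \) is a complex--analytic component of \( \pi^{-1}(Z) \), and using the Zariski density of \( Z\cap\Lambda \) together with a pigeonhole over Galois conjugates, one forces \( \Ztil \) to contain a connected positive--dimensional semialgebraic block \( B \).

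From the block \( B \) one invokes the mixed Ax--Lindemann theorem --- the extension over the unipotent fibre (due to Gao) of the hyperbolic Ax--Lindemann conjecture of Pila--Tsimerman and Ullmo cited for \cref{main-thm-higher} --- to conclude that the Zariski closure \( Z_0 \) of \( \pi(B) \) inside \( Z \) is a weakly special subvariety of positive dimension. One then bootstraps: running the argument over all Hecke points produces a family of such \( Z_0 \subset Z \), and a monodromy argument (as in the proof of \cref{main-thm-higher}) shows that \( Z \) is foliated by translates of a single weakly special direction, upgrading ``\( Z \) contains a weakly special subvariety'' to ``\( Z \) is weakly special''. The passage through the mixed structure can be organised as an induction on \( \dim S \): push \( Z \) and \( \Lambda \) forward along \( S\to S^{\mathrm{pure}} \), apply the conjecture to the image, and then analyse the fibre of \( Z \) over the resulting weakly special subvariety of \( S^{\mathrm{pure}} \) as a subvariety of a semi-abelian scheme, where the needed statement is an André--Pink / Manin--Mumford type result for semi-abelian varieties.

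The main obstacle is input (ii) over the pure part: there is no unconditional polynomial lower bound for Galois orbits of points in a Hecke orbit of a general pure Shimura variety, and it is exactly this gap that forces \cref{main-thm-curves} to restrict to \( \dim Z=1 \) (where Masser--Wüstholz suffices) and that stalls the higher--dimensional case. Secondary difficulties --- the uniform definability of \( \pi|_{\mathcal{F}} \) and the fundamental--domain estimates in the full mixed setting, and the complete mixed Ax--Lindemann statement --- are now essentially available through Gao's work, so the Galois--orbit lower bound remains the genuine conjectural core; this is why \cref{pink-conj} is stated as a conjecture rather than proved here.
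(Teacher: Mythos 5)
This statement is \cref{pink-conj}, the André--Pink conjecture: the paper does not prove it, but records it as an open conjecture (attributed to André and Pink) of which \cref{main-conj} is a special case. Your text is accordingly not a proof but a strategy outline, and you say so yourself at the end; as such there is no paper proof to compare it with. The outline you give is indeed the natural extension of the Pila--Zannier method the paper uses for \( \Ag \) (definability of the uniformisation on a fundamental set, polynomial height bounds for rational parameters of Hecke-orbit points, a Galois lower bound, Pila--Wilkie, then Ax--Lindemann), so as a description of the expected line of attack it is reasonable.

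However, your diagnosis of where the method breaks down does not match the paper. You claim the Galois lower bound is ``exactly the gap that forces \cref{main-thm-curves} to restrict to \( \dim Z = 1 \)''. That is not so: the Masser--Wüstholz bound (\cref{isogeny-bound}) applies to points of an isogeny class in \( \Ag \) regardless of \( \dim Z \), and it is what powers \cref{main-thm-higher} as well. The obstruction to the full \cref{main-conj} in higher dimension, as the introduction explains, is the failure of the induction step: when \( Z = S_1 \times Z' \), the decomposition \( S = S_1 \times S_2 \) need not correspond to a product decomposition of the abelian varieties (the André--Borovoi example in \( \cA_8 \)), so one cannot conclude that \( \{x_1\} \times Z' \) again meets a single isogeny class densely. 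Separately, for a general pure or mixed Shimura datum the real gap is that points need not parametrise abelian varieties at all, so Masser--Wüstholz cannot even be invoked and one needs a Galois lower bound for generalised Hecke orbits with no isogeny interpretation; the paper is explicit that its method is confined to Shimura varieties of Hodge type, and even there only to genuine isogeny classes rather than generalised Hecke orbits. So the conjectural core is both the Galois bound outside the Hodge-type setting \emph{and} the structural descent step, not the Galois bound alone.
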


Some cases of \cref{main-conj} are already known:
If the point \( s \) is Galois generic, then the isogeny class and the Hecke orbit coincide, and \cref{main-conj} follows from equidistribution results of Clozel, Oh and Ullmo, as was shown by Pink~\cite{pink:conj}.
When \( s \) is a special point, \cref{main-thm-curves} was proved by Edixhoven and Yafaev~\cite{edixhoven-yafaev} by exploiting the fact that Galois orbits of special points are contained in \( Z \cap T_g Z \) for suitable Hecke operators \( T_g \) and these Galois orbits tend to be large compared to the degree of \( T_g \).
When \( s \) corresponds to a product of elliptic curves, Habegger and Pila~\cite{habegger-pila:unlikely} proved the theorem using the method we extend here.

The terminology ``weakly special subvariety'' was introduced by Pink~\cite{pink:conj},
although the concept was first studied by Moonen~\cite{moonen:linearity-I}.
Moonen showed that a subvariety of a Shimura variety is totally geodesic (in the sense of differential geometry) if and only if it satisfies the following definition.
An algebraic subvariety \( Z \) of \( \Ag \) is called \defterm{weakly special} if there exist a sub-Shimura datum \( (H, X_H) \) of \( (\GSp_{2g}, \Hg^\pm) \), a decomposition
\[ (H^\ad, X_H^\ad) = (H_1, X_1) \times (H_2, X_2) \]
and a point \( x_2 \in X_2 \) such that \( Z \) is the image in \( \Ag \) of \( X_1 \times \{ x_2 \} \).
In other words, to say that \( Z \) is weakly special means that we can choose \( S \), \( S_1 \), \( S_2 \) in the conclusion of \cref{main-thm-higher} such that \( Z' \) is a single point in~\( S_2 \).
For more details, see section~\ref{subsec:special-subvars}.

In this article we will use a characterisation of weakly special subvarieties due to Ullmo and Yafaev~\cite{uy:characterisation}: \( Z \) is weakly special if and only if an irreducible component of \( \pi^{-1}(Z) \) is algebraic, where \( \pi \) is the quotient map \( \Hg \to \Ag \) and \( \Hg \subset \rM_{2g \times 2g}(\C) \) is the Siegel upper half space.
Here we call a subvariety of \( \Hg \) \defterm{algebraic} if it is a connected component of \( W_0 \cap \Hg \) for some algebraic variety \( W_0 \subset \rM_{2g \times 2g}(\C) \).
In order to prove \Cref{main-thm-higher} we require a strengthening of this characterisation called the hyperbolic Ax--Lindemann conjecture:
if \( W \) is a maximal algebraic subvariety of \( \pi^{-1}(Z) \), then \( \pi(W) \) is algebraic.
A proof of the Ax--Lindemann conjecture for \( \Ag \) was recently announced by Pila and Tsimerman~\cite{pila-tsimerman:ax-lindemann}.

\medskip

Our proof of \cref{main-thm-curves,main-thm-higher} follows the method proposed by Pila and Zannier for proving the Manin--Mumford and André--Oort conjectures~\cite{pila-zannier}.
This is based upon counting rational points of bounded height in certain analytic subsets of \( \Hg \), and applying the Pila--Wilkie counting theorem on sets definable in o-minimal structures.
% This method has been used by Pila and other authors to prove a variety of results concerning unlikely intersections of subvarieties of Shimura varieties.

The central part of the proof of \cref{main-thm-curves,main-thm-higher} is in section~\ref{sec:main-thm}.
This uses a strong version of the Pila--Wilkie counting theorem involving definable blocks.
The other ingredients are an upper bound for the heights of matrices in \( \GL_{2g}(\Q) \) relating isogenous points, proved in section~\ref{sec:matrix-heights}, and a lower bound for the Galois degrees of principally polarised abelian varieties in an isogeny class, derived from the Masser--Wüstholz isogeny theorem~\cite{mw:isogeny-avs}.

In section~\ref{sec:isog-fg-fields} we use a specialisation argument to prove a version of the Masser--Wüstholz isogeny theorem for finitely generated fields of characteristic~\( 0 \), generalising the original theorem which was valid only over number fields.
This is necessary in order to prove \cref{main-thm-curves,main-thm-higher} for points \( s \) and subvarieties \( Z \) defined over \( \C \) and not only over \( \bar{\Q} \).

\medskip

Now we consider some generalisations of \cref{main-thm-curves}.
\Cref{main-thm-curves} immediately implies \cref{pink-conj} for curves \( Z \) in Shimura varieties \( S \) of Hodge type, if we restrict to usual Hecke orbits.
This is because, by the definition of a Shimura variety of Hodge type, there is a finite morphism \( f \colon S \to \Ag \)
for some \( g \) such that the image of each Hecke orbit in \( S \) is contained in a Hecke orbit of \( \Ag \), and \( Z \subset S \) is weakly special if and only if \( f(Z) \subset \Ag \) is weakly special.
However this does not imply \cref{pink-conj} for generalised Hecke orbits in Shimura varieties of Hodge type, as a generalised Hecke orbit in \( S \) may map into infinitely many isogeny classes in~\( \Ag \).

Because of the use of the Masser--Wüstholz theorem, our method applies only to Shimura varieties parameterising abelian varieties i.e.\ those of Hodge type.
In particular, let us compare with Theorem~1.2 of \cite{edixhoven-yafaev}.
Take any Shimura datum~\( (G, X) \).
Edixhoven and Yafaev generalise Hecke orbits by choosing a representation of \( G \) and considering a set of points where the induced \( \Q \)-Hodge structures are isomorphic.
The Masser--Wüstholz theorem can be used only when these Hodge structures have type \( (-1,0)+(0,-1) \).
Hence our method lacks a key advantage of Edixhoven and Yafaev's formulation, namely that they can replace \( G \) by a subgroup so that \( Z \) is Hodge generic, or by its adjoint group.

This restriction to isogeny classes of abelian varieties rather than generalised Hecke orbits is related to our inability to prove the full \cref{main-conj}.
In the case of the André--Oort conjecture, a conclusion as in \cref{main-thm-higher} implies the full conjecture by induction on \( \dim Z \)  (see~\cite{ullmo:hyp-ax-lind}).
This is because, when \( Z \) is of the form \( S_1 \times Z' \), special points in \( Z \) project to special points in \( Z' \).

This does not work for \cref{main-conj} because the hypothesis that \( Z = S_1 \times Z' \) contains a dense set of points from a single isogeny class does not imply the same thing for \( \{ x_1 \} \times Z' \), where we fix a point \( x_1 \in S_1 \) in order to realise \( Z' \) as a subvariety of \( \Ag \).
The problem is that the decomposition \( S = S_1 \times S_2 \) need not have an interpretation in terms of moduli of abelian varieties.
For example this happens in André and Borovoi's example of a subvariety \( S \subset \cA_8 \) which decomposes as a product of Shimura varieties but where the generic abelian variety in the family parameterised by \( S \) is simple.

\subsection*{Acknowledgements}
I am grateful to Emmanuel Ullmo for suggesting to me the problem treated in this paper and for regular conversations during its preparation.
I would also like to thank Barinder Banwait for his comments on an earlier version of the manuscript,
and Gaël Rémond for remarks on \cref{isogeny-bound}.
I am grateful to the referee for their detailed comments.

This paper was published in \textit{Journal für die reine und angewandte Mathematik (Crelles Journal)}, 2015, Issue 705, p.~211--231 (DOI: \href{http://dx.doi.org/10.1515/crelle-2013-0058}{\texttt{10.1515/crelle-2013-0058}}).
The published version is available at \href{http://www.degruyter.com/view/j/crelle.2015.2015.issue-705/crelle-2013-0058/crelle-2013-0058.xml}{\texttt{www.degruyter.com}}.

There is a gap in the proof of \cref{matrix-height-bound-fg} in the published version of this paper.
This was discovered by Gabriel Dill during his ongoing PhD studies (2016).
I am very grateful to Gabriel both for pointing out this gap and for finding a method of fixing it, which has been added to the arXiv version of the paper as \cref{correction:rat-rep-height-bound-plus}.

\section{Shimura varieties and the Zilber--Pink conjecture} \label{sec:shimura-vars}

In this article we consider only the moduli space of abelian varieties and its subvarieties, however to place \cref{main-conj} in its proper context we need to consider Shimura varieties.
For the convenience of the reader, we briefly summarise the theory of Shimura varieties and the Zilber--Pink conjecture in this section.
This contains no original material:
the primary sources are \cite{deligne:shimura-vars} for Shimura varieties, \cite{moonen:linearity-I} for special and weakly special subvarieties (special subvarieties are what Moonen calls subvarieties of Hodge type) and \cite{pink:zp} for the Zilber--Pink conjecture.
We also prove that the Zilber--Pink conjecture implies \cref{main-conj} by an argument which is essentially due to Pink.

\subsection{Shimura varieties}

A \defterm{Shimura datum} is defined to be a pair \( (G, X) \) where \( G \) is a reductive algebraic group over \( \Q \) and \( X \) is a \( G(\R) \)-conjugacy class of homomorphisms
\[ h \colon \Res_{\C/\R} \G_m \to G_\R \]
satisfying the following conditions:

\begin{enumerate}[(SV1)]
\item the Hodge structure on the adjoint representation of \( G \) induced by \( h \) has type contained in \( \{ (-1,1), (0,0), (1,-1) \} \);
\item \( \operatorname{int} h(i) \) induces a Cartan involution of the adjoint group of \( G(\R) \);
\item \( G^\ad \) has no factor defined over \( \Q \) on which the projection of \( h \) is trivial.
\end{enumerate}

Under these conditions, each connected component of \( X \) is a Hermitian symmetric domain on which the identity component \( G(\R)^+ \) of \( G(\R) \) acts holomorphically.

Let \( K \) be a compact open subgroup of \( G(\A_f) \), where \( \A_f \) is the ring of finite adeles.
We define
\[ \Sh_K(G, X) = G(\Q) \backslash X \times G(\A_f) / K, \]
where \( G(\Q) \) acts diagonally on \( X \times G(\A_f) \) on the left and \( K \) acts on \( G(\A_f) \) only on the right.
Deligne~\cite{deligne:shimura-vars} showed that \( \Sh_K(G, X) \) can be given the structure of an algebraic variety over a number field, called a \defterm{Shimura variety}.

Choose a connected component \( X^+ \) of \( X \).
Let \( G(\R)_+ \) be the preimage of the identity component (in the analytic topology) of \( G^\ad(\R) \) in \( G(\R) \); this is the stabiliser of \( X^+ \).
Let \( G(\Q)_+ = G(\R)_+ \cap G(\Q) \).

The image of \( X^+ \times \{ 1 \} \subset X \times G(\A_f) \) in \( \Sh_K(G, X)_\C \) is called the \defterm{neutral component} of \( \Sh_K(G, X)_\C \).
As a complex manifold it is canonically isomorphic to
\[ \Gamma \backslash X^+ \]
where \( \Gamma = K \cap G(\Q)_+ \) is a congruence subgroup of \( G(\Q)_+ \).

Let \( (G, X) \) be a Shimura datum and let \( G^\ad = G/Z(G) \) be the adjoint group of \( G \), where \( Z(G) \) is the centre of \( G \).
We get a new Shimura datum \( (G^\ad, X^\ad) \) by letting \( X^\ad \) be the \( G^\ad(\R) \)-conjugacy class of morphisms \( \Res_{\C/\R} \G_m \to G^\ad_\R \) containing \( \pi \circ x \) for \( x \in X \), where \( \pi \) is the quotient map \( G \to G^\ad \).
The map \( X \to X^\ad \) is an injection whose image is a union of connected components of \( X^\ad \) (\cite{moonen:linearity-I} section~2.1).

\subsection{Moduli of abelian varieties}

The fundamental example of a Shimura variety is the moduli space of principally polarised abelian varieties of dimension~\( g \).
We recall briefly that a \defterm{polarisation} of an abelian variety of \( A \) is an isogeny \( A \to A^\vee \) to the dual variety satisfying a certain positivity condition.
Any polarisation induces a symplectic form \( H_1(A, \Z) \times H_1(A, \Z) \to \Z \).
A polarisation is \defterm{principal} if it has degree \( 1 \) -- that is if it is an isomorphism \( A \to A^\vee \).

The moduli space of principally polarised abelian varieties of dimension~\( g \) is associated with the Shimura datum \( (\GSp_{2g}, \Hg^{\pm}) \) where \( \GSp_{2g} \) is the group of symplectic similitudes and the conjugacy class of Hodge parameters can be identified with the union of the Siegel upper and lower half spaces
\begin{align*}
   \Hg   & = \{ Z \in \rM_g(\C) \mid Z \text{ is symmetric and } {\Im Z} \text{ is positive definite} \},
\\ \Hg^- & = \{ Z \in \rM_g(\C) \mid Z \text{ is symmetric and } {\Im Z} \text{ is negative definite} \}.\end{align*}
The action of \( \GSp_{2g}(\R) \) on \( \Hg^\pm \) is given by
\[ \fullsmallmatrix{A}{B}{C}{D}Z = (AZ + B)(CZ + D)^{-1}, \quad A, B, C, D \in \rM_g(\R). \]

Taking \( K = \GSp_{2g}(\hat{\Z}) \), we get the Shimura variety \( \Ag \) whose \( \C \)-points are in bijection with isomorphism classes of principally polarised abelian varieties of dimension~\( g \) over \( \C \).

\subsection{Hecke correspondences}

Let \( (G, X) \) be a Shimura datum and \( K \subset G(\A_f) \) a compact open subgroup.
Choose \( g \in G(\A_f) \) and let \( K_g = K \cap gKg^{-1} \).
The inclusion \( K_g \hookrightarrow K \) induces a finite morphism
\[ \pi \colon \Sh_{K_g}(G, X) \to \Sh_K(G, X). \]
Because \( K_g \) is normalised by \( g \), we also have an automorphism \( \tau_g \colon \Sh_{K_g}(G, X) \to \Sh_{K_g}(G, X) \) which sends the double coset \( [x, \theta K_g] \) to \( [x, \theta gK_g] \).
We define a finite correspondence \( T_g \) on \( \Sh_K(G, X) \) by
\[ \xymatrix{
   & \Sh_{K_g}(G, X)	\ar[ld]_{\pi}	\ar[rd]^{\pi \circ \tau_g}
   &
\\
     \Sh_K(G, X)	\ar[rr]^{T_g}
   &
   & \Sh_K(G, X)
} \]

Such correspondences are called \defterm{Hecke correspondences} and, for any point \( s \in \Sh_K(G, X) \), the \defterm{Hecke orbit} of \( s \) is the union
\[ \bigcup_{g \in G(\A_f)} T_g.s. \]

The Hecke correspondence \( T_g \) depends only on the double coset \( KgK \).
If we consider only Hecke correspondences which preserve the neutral component of \( \Sh_K(G, X) \), then each double coset contains an element of \( G(\Q)_+ \).

In the case of the moduli space of principally polarised abelian varieties,
whenever two points \( s, t \in \Ag \) lie in the same Hecke orbit, the associated abelian varieties are isogenous.
The converse is not true: \( s \) and \( t \) are in the same Hecke orbit if and only if there is a \defterm{polarised isogeny} between the associated principally polarised abelian varieties -- that is, an isogeny \( f \colon A_s \to A_t \) such that
\[ f^* \lambda_t = n.\lambda_t \]
for some \( n \in \Z \).

\subsection{Special and weakly special subvarieties} \label{subsec:special-subvars}

Let \( (G, X) \) be a Shimura datum and \( K \subset G(\A_f) \) a compact open subgroup.
A Shimura datum \( (H, X_H) \) is a \defterm{Shimura sub-datum} of \( (G, X) \) if \( H \subset G \) and \( X_H \subset X \).
Letting \( K_H = K \cap H(\A_f) \), we get a finite morphism \( \iota_H \colon \Sh_{K_H}(H, X_H) \to \Sh_K(G, X) \).

An irreducible subvariety of \( S \subset \Sh_K(G, X) \) is said to be a \defterm{special subvariety} if there exist a Shimura sub-datum \( (H, X_H) \subset (G, X) \) and an element \( g \in G(\A_f) \) such that \( S \) is an irreducible component of the image of
\[ T_g \circ \iota_H \colon \Sh_{K_H}(H, X_H) \to \Sh_K(G, X). \]
The Hecke correspondence \( T_g \) is needed in this definition only because the image of \( \iota_H \) might not intersect every geometrically connected component of \( \Sh_K(G, X) \).
In the case of a geometrically connected Shimura variety such as \( \Ag \), every special subvariety is simply an irreducible component of the image of \( \iota_H \) for some Shimura sub-datum.

A \defterm{Shimura morphism} of Shimura varieties is a morphism induced by a homomorphism of the underlying algebraic groups.
Let \( S = \Sh_{K}(G, X) \) and \( S_1 = \Sh_{K_1}(H_1, X_1) \), and let \( f \colon S \to S_1 \) be a surjective Shimura morphism.
Then the adjoint Shimura datum \( (G^\ad, X^\ad) \) splits as a direct product \( (H_1^\ad \times H_2, X_1^\ad \times X_2) \) for some adjoint semisimple group \( H_2 \).
If the compact open subgroup \( K^\ad \subset G^\ad(\A_f) \) splits as a direct product \( K_1^\ad \times K_2 \), then \( S \) is the union of some connected components of \( S_1^\ad \times \Sh_{K_2}(H_2, X_2) \).
(We can always obtain such a decomposition of \( K^\ad \) by replacing \( K \) by a subgroup of finite index.)

An irreducible subvariety \( Z \subset \Sh_K(G, X) \) is a \defterm{weakly special subvariety} if there exist a sub-Shimura datum \( (H, X_H) \subset (G, X) \), an element \( g \in G(\A_f) \), some other Shimura datum \( (H', X') \) and a Shimura morphism
\[ f \colon \Sh_{K_H}(H, X_H) \to \Sh_{K_1}(H_1, X_1) \]
such that \( Z \) is an irreducible component of the image under \( T_g \circ \iota_H \) of the fibre \( f^{-1}(s) \) for some point \( s \in \Sh_{K_1}(H_1, X_1) \).

An example of a weakly special subvariety is the subvariety of \( \cA_2 \) parameterising principally polarised abelian surfaces of the form \( E_0 \times E \), where \( E_0 \) is a fixed elliptic curve and \( E \) a varying elliptic curve.
This is a special subvariety if and only if \( E_0 \) has complex multiplication.
Here the Shimura variety \( \Sh_{K_H}(H, X_H) \) is \( \cA_1 \times \cA_1 \) and the Shimura morphism \( f \) is the projection onto the first factor.
Note that in this example, \( \iota_H \colon \cA_1 \times \cA_1 \to \cA_2 \) is not injective but is the quotient by the action of \( \Z/2 \) exchanging the two factors \( \cA_1 \).

By the above discussion of the structure of Shimura morphisms, every weakly special subvariety arises from a decomposition of \( (H^\ad, X_H^\ad) \) as a direct product.
In particular, the definition of weakly special subvarieties in terms of fibres of Shimura morphisms is equivalent to the definition in the introduction in terms of a decomposition of an adjoint Shimura datum.

However, not every weakly special subvariety of \( \Ag \) arises from a product decomposition of the associated abelian varieties as in the example.
This is important as it prevents us from proving \cref{main-conj}.
The smallest example of a weakly special subvariety of \( \Ag \) which does not come from a product decomposition of abelian varieties is for \( g = 8 \).
It is due to André~\cite{andre:weakly-special} and Borovoi and is described by Moonen~\cite{moonen:linearity-I}.

\subsection{The Zilber--Pink conjecture}

Let \( S \) be a Shimura variety.
If \( Z \) is a subvariety of \( S \), let \( S_Z \) be the smallest special subvariety of \( S \) containing \( Z \).
This exists because every connected component of an intersection of special subvarieties is special.
We define the \defterm{defect} of \( Z \) to be \( \dim S_Z - \dim Z \).

\begin{conjecture}[\cite{pink:zp}~Conjecture~1.1] \label{zp-conj}
Let \( S \) be a Shimura variety and \( Z \subset S \) an irreducible subvariety over \( \C \).
Suppose that \( Z \) contains a Zariski dense set of points of defect at most \( d \).
Then the defect of \( Z \) itself is at most \( d \).
\end{conjecture}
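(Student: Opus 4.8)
I should note at the outset that \cref{zp-conj} is the Zilber--Pink conjecture itself, which is open in general; in the present paper it is recorded as motivation, and the content of this section is the implication \cref{zp-conj} \( \Rightarrow \) \cref{main-conj}. Accordingly, what follows is a programme for attacking \cref{zp-conj} directly, modelled on the Pila--Zannier strategy that this paper applies to the curve case of \cref{main-conj}. Fix a Shimura variety \( S = \Sh_K(G, X) \) with uniformisation \( \pi\colon X^+ \to S_\C \), and an irreducible \( Z \subset S \) containing a Zariski dense set \( \Sigma \) of points of defect \( \le d \). After a specialisation reduction of the type carried out in section~\ref{sec:isog-fg-fields} one may assume \( Z \), and a chosen minimal special subvariety \( S_s \ni s \) with \( \dim S_s \le \dim Z + d \) for each \( s \in \Sigma \), are defined over a number field \( E \); write \( S_s \) as a component of the image of \( \Sh_{K_H}(H, X_H) \) for a Shimura sub-datum with generic Mumford--Tate group \( H \) and \( K_H = K \cap H(\A_f) \). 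Arguing by contradiction, suppose \( \dim S_Z - \dim Z > d \); replacing \( S \) by \( S_Z \) we may assume \( S_Z = S \), so that \( Z \) is Hodge generic and all the \( S_s \) are proper.

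The first ingredient is a Galois lower bound. Attach to each \( S_s \) a complexity \( \Delta(S_s) \), measuring the discriminant of a splitting field of \( H^{\mathrm{der}} \), the index \( [K : K_H] \), and the denominators of the \( \Q \)-structure on \( X_H \); one wants \( [E(s) : E] \gg \Delta(S_s)^\delta \) for some \( \delta > 0 \). This is the general ``large Galois orbits'' hypothesis; in the abelian setting it is exactly what one extracts from the Masser--W\"ustholz isogeny theorem, as in section~\ref{sec:matrix-heights}. The second ingredient is geometric. Fix a fundamental set \( \mathcal F \subset X^+ \) that is semialgebraic and definable in \( \R_{\mathrm{an,exp}} \). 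Each \( S_s \) lifts to a piece of a \( G(\R) \)-translate of a sub-domain \( X_H^+ \subset X^+ \), and its defining parameters -- the matrix data of the translating element and of the embedding \( X_H \hookrightarrow X \) -- have height bounded polynomially in \( \Delta(S_s) \). Intersecting with \( \mathcal F \) and letting \( s \) range over \( \Sigma \) produces a set definable in \( \R_{\mathrm{an,exp}} \) whose rational points of height \( \le T \) number \( \gg \Delta^\delta \); once this beats the Pila--Wilkie bound, the block form of the counting theorem used in section~\ref{sec:main-thm} yields a positive-dimensional semialgebraic family of lifted special subvarieties contained in \( \pi^{-1}(Z) \).

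The final step is functional transcendence: feed this family into hyperbolic Ax--Lindemann -- and, to control the monodromy of the family rather than only its maximal algebraic subvarieties, into the Ax--Schanuel conjecture for \( S \) -- to conclude that \( Z \) contains a positive-dimensional weakly special subvariety through a Zariski dense set of points, and hence a special subvariety \( S' \subset S_Z \) with \( \dim S' > \dim Z \). One then induces on \( \dim S_Z - \dim Z \): using the decomposition attached to \( S' \), replace \( Z \) by its image under the Shimura projection collapsing the extra special directions, where the defect is again \( \le d \), and repeat until the defect of \( Z \) drops to \( \le d \). The main obstacles are precisely the two inputs that are not available in full generality: (i) the large-Galois-orbits lower bound for arbitrary special subvarieties of an arbitrary Shimura variety, known only in restricted cases, for instance via Masser--W\"ustholz as exploited here; and (ii) a complexity-sensitive form of hyperbolic Ax--Lindemann, and more fundamentally the Ax--Schanuel conjecture, for all Shimura varieties -- the results of Pila--Tsimerman and Ullmo cited in the introduction being limited to \( \Ag \). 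It is because of obstacle (i) that the method of this paper yields only the abelian instances recorded in \cref{main-thm-curves,main-thm-higher} rather than the full \cref{zp-conj}.
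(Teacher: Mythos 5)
You have correctly identified that \cref{zp-conj} is the Zilber--Pink conjecture itself: the paper states it as an open conjecture, offers no proof of it, and uses it only as the hypothesis of the lemma showing \cref{zp-conj} implies \cref{main-conj}. There is therefore no proof in the paper to compare yours against, and your submission is, as you acknowledge, a programme rather than a proof. As a programme it is a reasonable summary of the Pila--Zannier strategy, but it does not close: the two inputs you flag yourself --- a large-Galois-orbits lower bound \( [E(s):E] \gg \Delta(S_s)^{\delta} \) for special subvarieties of arbitrary Shimura varieties, and a sufficiently strong functional transcendence statement (Ax--Schanuel, or a complexity-sensitive Ax--Lindemann) beyond \( \Ag \) --- are genuinely open, and each is a missing idea rather than a routine verification. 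I would also caution you about the final inductive step, where you ``collapse the extra special directions'' by a Shimura projection and assert that the defect hypothesis persists: making such a descent precise is delicate, and the paper's own discussion of why \cref{main-thm-higher} does not upgrade to \cref{main-conj} (the product decomposition \( S = S_1 \times S_2 \) need not respect the arithmetic condition defining the dense set of points) is exactly a warning that hypotheses of this kind do not automatically survive projection. In short: you were right not to claim a proof, and the genuine gaps are the ones you named plus the unproved descent step.
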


If we take \( d = 0 \) then this becomes the André--Oort conjecture: an irreducible subvariety of a Shimura variety containing a Zariski dense set of special points is a special subvariety.

\begin{lemma}
\Cref{zp-conj} implies \cref{main-conj}.
\end{lemma}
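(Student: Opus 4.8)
The plan is to turn the hypothesis on the isogeny class into a bound on the defect of \( Z \) by applying \cref{zp-conj}, and then to extract from that bound the conclusion that \( Z \) is weakly special; this is essentially Pink's argument.

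First I would record that every point of \( \Lambda \) has a common, bounded defect in \( \Ag \). We may assume \( s \in Z \), replacing \( s \) by any point of the Zariski-dense set \( Z \cap \Lambda \) (this leaves \( \Lambda \) unchanged). For any \( t \in \Lambda \), an isogeny \( A_s \to A_t \) induces an isomorphism of rational Hodge structures \( H_1(A_s,\Q) \xrightarrow{\sim} H_1(A_t,\Q) \); after fixing symplectic bases adapted to the principal polarisations, this is an element of \( \GL_{2g}(\Q) \) conjugating the Mumford--Tate group \( \mathbf{MT}_{A_s} \) onto \( \mathbf{MT}_{A_t} \) inside \( \GSp_{2g} \). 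Hence these groups are isomorphic as \( \Q \)-algebraic groups, their Hermitian symmetric domains have equal dimension, and the smallest special subvarieties of \( \Ag \) containing \( s \) and containing \( t \) have the same dimension \( d_0 \). Thus \( Z \cap \Lambda \) is a Zariski-dense set of points of \( Z \) of defect at most \( d_0 \), and \cref{zp-conj} gives \( \operatorname{defect}(Z) \le d_0 \), i.e.\ \( \dim S_Z \le \dim Z + d_0 \). (When \( d_0 = 0 \) this is already the André--Oort assertion, which recovers the case treated by Edixhoven and Yafaev.)

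Next I would deduce that \( Z \) is weakly special, by induction on the dimension of the ambient Shimura variety. Since a subvariety of \( S_Z \) is weakly special in \( \Ag \) if and only if it is weakly special in \( S_Z \), replace \( \Ag \) by \( S_Z \), so that \( Z \) is Hodge generic in \( S_Z = \Sh_{K_H}(H, X_H) \) and \( \mathbf{MT}_{A_s} \subseteq H \). Decompose the adjoint datum \( (H^\ad, X_H^\ad) = \prod_i (H_i, X_i) \) into \( \Q \)-simple factors, with projections \( \pi_i \colon S_Z \to S_i \). If some \( \pi_i(Z) \) is positive-dimensional, it contains a Zariski-dense set of points from a single generalised Hecke orbit of \( S_i \); by the inductive hypothesis it is weakly special, hence, being Hodge generic with \( H_i \) simple, equal to \( S_i \). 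The factors on which \( \pi_i(Z) \) is a point assemble into a Shimura morphism \( f \colon S_Z \to S' \) with \( f(Z) \) a point, whose fibre \( W = f^{-1}(f(s)) \) is weakly special and contains \( Z \); the goal is to show \( \dim W = \dim Z \), whence \( Z = W \). This is where the inequality \( \dim S_Z \le \dim Z + d_0 \) must be played against the way \( \mathbf{MT}_{A_s} \) — and therefore, by the first step, the Mumford--Tate group of every point of \( Z \cap \Lambda \) — projects onto the factors \( H_i \) indexing the "point" factors. The remaining configurations, in which \( H^\ad \) is \( \Q \)-simple and no dimension reduction is available, are settled by showing that \( s \) must then be Galois generic in \( S_Z \) and invoking the equidistribution results of Clozel, Oh and Ullmo to conclude \( Z = S_Z \).

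The hard part is the step just described: deriving the rigid fibre description from the defect bound. The inequality \( \dim S_Z \le \dim Z + d_0 \) on its own is too weak — for instance a Hodge generic curve in \( \cA_1 \times \cA_1 \) satisfies it (with \( d_0 = 1 \)) without being weakly special — so one must use essentially that the dense set \( Z \cap \Lambda \) lies in a single isogeny class, which forces the Mumford--Tate groups of its points, and the special subvarieties through them, to be uniformly \( \GL_{2g}(\Q) \)-conjugate and hence to behave identically on each \( \Q \)-simple factor of \( H^\ad \). Making this into a dimension count that matches the defect of \( Z \) against the factors on which \( Z \) is constant, handling the case where \( \mathbf{MT}_{A_s} \) meets several of the \( H_i \) diagonally rather than as a direct product, and isolating the Galois generic sub-cases where only equidistribution and not \cref{zp-conj} applies, is the substantive content of the proof.
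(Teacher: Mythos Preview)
Your proposal has a genuine gap. You apply \cref{zp-conj} inside \( \Ag \) to the points of \( \Lambda \), obtaining only \( \dim S_Z \le \dim Z + d_0 \), and you correctly note that this inequality alone is too weak (your Hodge generic curve in \( \cA_1 \times \cA_1 \) is a counterexample). Your proposed repair --- an induction over the \( \Q \)-simple factors of \( H^\ad \) together with Clozel--Oh--Ullmo equidistribution in the residual simple case --- is not a derivation from \cref{zp-conj}: equidistribution is an independent input, whereas the lemma asserts that Zilber--Pink \emph{alone} implies \cref{main-conj}. The induction step is also not justified: the projection of an isogeny class in \( \Ag \) to a factor \( S_i \) is not in general a single isogeny class or generalised Hecke orbit, so the inductive hypothesis (which is \cref{main-conj} itself, about isogeny classes in \( \Ag \)) does not apply to \( \pi_i(Z) \subset S_i \). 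And in the \( \Q \)-simple case there is no reason \( s \) must be Galois generic in \( S_Z \). You yourself describe the remaining step as ``the substantive content of the proof'' without supplying it.

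The missing idea, which is the actual content of Pink's argument, is to pass to \( \Ag \times \Ag \). The point is not that each \( t \in \Lambda \) has the same defect as \( s \), but that the \emph{pair} \( (s,t) \in \Ag \times \Ag \) has the same defect as \( s \): the Mumford--Tate group of \( A_s \times A_t \) is the graph \( \{ (x, gxg^{-1}) \} \) of conjugation by the rational representation \( g \) of an isogeny \( A_t \to A_s \), hence isomorphic to that of \( A_s \). One then applies \cref{zp-conj} to \( \{s\} \times Z \subset \Ag \times \Ag \), obtaining \( \dim S' - \dim Z \le d \) where \( S' \) is the smallest special subvariety containing \( \{s\} \times Z \) and \( d = \dim S \) is the defect of \( s \). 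Projection of \( S' \) to the first factor is a Shimura morphism whose image is special and contains \( s \), hence contains \( S \); its fibres all have the same dimension, which is at least \( \dim Z \), so \( \dim S' - \dim Z \ge d \). Equality in both inequalities forces \( \{s\} \times Z \) to be exactly a fibre of \( S' \to S \), hence weakly special, and so \( Z \) is weakly special in \( \Ag \). No induction, no equidistribution.
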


\begin{proof}
This proof is based on \cite{pink:zp} Theorem~3.3.
Since \cref{main-conj} concerns isogeny classes rather than Hecke orbits, this lemma is slightly stronger than the \( \Ag \) case of Pink's theorem.

The only addition we need to make to Pink's proof is to note that if \( s, t \in \Ag \) correspond to isogenous abelian varieties, then \( s \in \Ag \) and \( (s, t) \in \Ag \times \Ag \) each have the same defect.
This follows from the fact that the abelian varieties \( A_s \) and \( A_s \times A_t \) have isomorphic Mumford--Tate groups.
Specifically, if \( M \subset \GSp_{2g}(\Q) \) is the Mumford--Tate group of \( A_s \) and \( g \in \GL_{2g}(\Q) \) is the rational representation of some isogeny \( A_t \to A_s \) (as in section~\ref{subsec:rat-reps}),
then the Mumford--Tate group of \( A_s \times A_t \) is the image of the embedding \( M \to \GL_{2g} \times \GL_{2g} \) given by \( x \mapsto (x, gxg^{-1}) \).

Let \( S \) be the smallest special subvariety of \( \Ag \) containing the point \( s \), and let \( d = \dim S \).
Let \( S' \subset \Ag \times \Ag \) be the smallest special subvariety of \( \Ag \times \Ag \) containing \( \{ s \} \times Z \).

As we noted above, each point of \( \{s\} \times \Lambda \) has defect \( d \) so by \cref{zp-conj},
\[ \dim S' - \dim Z \leq d. \]

The projection of \( S' \) onto the first factor is a special subvariety of \( \Ag \) containing \( \{s\} \), so it must contain \( S \).
This projection is a Shimura morphism so comes from a product decomposition of the adjoint Shimura datum associated with \( S' \).
Hence all fibres of this projection have the same dimension, which must be at least \( \dim Z \).
So
\[ \dim S' - \dim Z \geq \dim S = d. \]

So we must have equality in both inequalities.
Equality in the latter implies that \( \{ s \} \times Z \) is a fibre of the Shimura morphism \( S' \to S \) and so is weakly special.
This implies that \( Z \) is weakly special in \( \Ag \).
\end{proof}

\section{Proof of main theorem} \label{sec:main-thm}

In this section we will deduce our main theorems \labelcref{main-thm-curves,main-thm-higher} from the matrix height bounds of section~\ref{sec:matrix-heights} and the isogeny bound of section~\ref{sec:isog-fg-fields}.
Accordingly fix a point \( s \in \Ag(\C) \) and let \( \Lambda \) be its isogeny class.
Let \( Z \subset \Ag \) be an irreducible closed algebraic subvariety such that \( Z \cap \Lambda \) is Zariski dense in \( Z \).

We begin with some definitions and notation.
Let \( \pi \colon \Hg \to \Ag \) denote the quotient map and \( \Fg \subset \Hg \) the Siegel fundamental domain.
Let \[ \Ztil = \pi^{-1}(Z) \cap \Fg \quad \text{and} \quad \Ltil = \pi^{-1}(\Lambda) \cap \Fg. \]
Fix a point \( \stil \in \Hg \) such that \( \pi(\stil) = s \).

We define the \defterm{complexity} of a point \( t \in \Lambda \) to be the minimum degree of an isogeny \( A_s \to A_t \) between the abelian varieties corresponding to the points \( s \) and \( t \) of \( \Ag \).
We may also talk about the complexity of a point in \( \Ltil \), meaning the complexity of its image in \( \Lambda \).

For a matrix \( \gamma \in \rM_{n \times n}(\Q) \), the \defterm{height} \( H(\gamma) \) will mean the maximum of the standard multiplicative heights of the entries of \( \gamma \).
A straightforward calculation shows that if \( \gamma_1, \gamma_2 \in \rM_{n \times n}(\Q) \) then
\[ H(\gamma_1 \gamma_2) \leq nH(\gamma_1)H(\gamma_2). \]

\subsection{O-minimality and definability}

A key role in the proof is played by the Pila--Wilkie theorem on definable sets in o-minimal structures.
For an introduction to o-minimality and the Pila--Wilkie theorem, see~\cite{scanlon:bams-survey}.
Here we recall some of the definitions and a strengthened version of the Pila--Wilkie theorem, due to Pila.

A \defterm{structure \( \mathcal{S} \) (over \( \R \))} is a sequence \( \mathcal{S}_n \) of collections of subsets of \( \R^n \) for each natural number \( n \) such that

\begin{enumerate}
\item \( \mathcal{S}_n \) is closed under finite unions, intersections and complements;
\item \( \mathcal{S}_n \) contains all semialgebraic subsets of \( \R^n \) (that is, those sets definable by polynomial inequalities);
\item if \( A \in \mathcal{S}_m \) and \( B \in \mathcal{S}_n \) then \( A \times B \in \mathcal{S}_{m+n} \);
\item if \( m \geq n \) and \( A \in \mathcal{S}_m \) then \( \pi(A) \in \mathcal{S}_n \), where \( \pi \colon \R^m \to \R^n \) is projection onto the first \( n \) coordinates.
\end{enumerate}

The sets in \( \mathcal{S}_n \) are called the \defterm{definable sets} of the structure \( \mathcal{S} \).
A function \( f \colon A \to B \) for \( A \subset \R^m \), \( B \subset \R^n \) is said to be \defterm{definable} if its graph is a definable subset of \( \R^{m+n} \).

A structure is \defterm{o-minimal} if every set in \( \mathcal{S}_1 \) is a finite union of points and intervals.
The basic example of an o-minimal structure is the structure of semialgebraic sets.
The o-minimality condition implies that all definable sets in the structure are topologically well-behaved:
for example they have finitely many connected components, a finite a cell decomposition and can be stratified and triangulated (see \cite{vandendries:tame-topology}).

For the purposes of this article, we will only use the structure \( \R_{\mathrm{an,exp}} \) generated by the graphs of restricted analytic functions and the real exponential function.
Accordingly, \defterm{definable} will henceforth mean definable in \( \R_{\mathrm{an,exp}} \).
A restricted analytic function is a function \( f \colon [0,1]^n \to \R \) which extends to a real analytic function on some open neighbourhood of \( [0,1]^n \).
This structure was shown to be o-minimal by van den Dries and Miller~\cite{vandendries-miller}.

According to \cite{baily:compactification-ag}, there is a map \( \pi \colon \Hg \to \bP^N(\C) \) which induces an embedding of \( \Ag \) as a quasi-projective variety.
Siegel~\cite{siegel:symplectic-geometry} used Minkowski's reduction theory to construct a semialgebraic fundamental domain for the action of \( \Sp_{2g}(\Z) \) on \( \Hg \); we will call this domain \( \Fg \).
Crucially for us, Peterzil and Starchenko have shown that the restriction of \( \pi \) to \( \Fg \) is definable in \( \R_{\mathrm{an,exp}} \) \cite{peterzil-s:definability}.
This implies that, if \( Z \subset \Ag \) is an algebraic subvariety, then \( \pi^{-1}(Z) \cap \Fg \) is definable.

The principal theorem we shall use about o-minimal structures is a strong version of the Pila--Wilkie theorem, which uses definable blocks.
A definable block is a definable set which is connected and almost semialgebraic.
More precisely, a \defterm{(definable) block} of dimension \( w \) in \( \R^n \) is a connected definable subset \( W \subseteq \R^n \) of dimension \( w \), regular at every point, such that there is a semialgebraic set \( A \subseteq \R^n \) of dimension~\( w \), regular at every point, with \( W \subseteq A \).

A \defterm{definable block family} is a definable subset \( W \) of \( \R^n \times \R^m \) such that for each \( \eta \in \R^m \), \( W_\eta = \{ x \in \R^n \mid (x, \eta) \in W \} \) is a definable block.

The statement of the following theorem has been simplified by referring only to rational points and requiring \( Z \) to be a single definable set.
Pila's theorem works for points over number fields of some chosen degree and allows \( Z \) itself to be a definable family but these are not necessary for our purposes.

\begin{theorem} [\cite{pila:aomml}~Theorem~3.6] \label{pila-wilkie}
Let \( Z \subset \R^n \) be a definable set and \( \epsilon > 0 \).
There are a finite number \( J = J(Z, \epsilon) \) of definable block families
\[ \cW^{(j)} \subset \R^n \times \R^m, \; j = 1, \dotsc, J \]
and a constant \( c = c(Z, \epsilon) \) such that:
\begin{enumerate}
\item for all \( \eta \in \R^m \),
\[ \cW^{(j)}_\eta \subset Z; \]
\item for all \( T \geq 1 \), the rational points of \( Z \) of height at most \( T \) are contained in the union of at most \( cT^\epsilon \) definable blocks of the form \( \cW^{(j)}_\eta \) (for some \( j \in \{ 1, \dotsc, J \} \) and some \( \eta \in \R^m \)).
\end{enumerate}
\end{theorem}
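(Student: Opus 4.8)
The plan is to follow the Bombieri--Pila and Pila--Wilkie strategy: reparametrise $Z$ by maps with small high-order derivatives, use a determinant (interpolation) argument to trap the rational points on few algebraic hypersurfaces, and then induct on $k := \dim Z$ to separate the algebraic part of $Z$, which supplies the blocks, from the rest, where the rational points are few. Throughout, one keeps everything uniform over auxiliary parameters so that the final collections of blocks organise into finitely many definable block families.

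The first ingredient is the Yomdin--Gromov $C^r$-parametrisation theorem in its o-minimal (and family) form: for every integer $r \geq 1$ there are an integer $N = N(Z, r)$ and definable maps $\phi_1, \dots, \phi_N \colon (0,1)^k \to Z$ whose images cover $Z$ and all of whose partial derivatives of order $\leq r$ are bounded by $1$ in sup-norm; and when $Z$ is replaced by a definable family, $N$ and the $\phi_i$ may be chosen uniformly in the parameter. This is proved by induction on dimension using o-minimal cell decomposition, with careful control of high-order derivatives under the reparametrising substitutions, and it is the technical heart of the whole argument.

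The second ingredient is the determinant method. Fix $\epsilon > 0$, choose $d = d(k, n, \epsilon)$ and then $r = r(d, k, n, \epsilon)$ large. On the image of a single $\phi_i$ consider the rational points of height at most $T$; pulling them back and forming the matrix of all monomials of degree $\leq d$ evaluated at clusters of nearby preimages, the mean value theorem together with the $C^r$-bounds makes the determinant extremely small, while a nonzero integer determinant is at least a fixed negative power of $T$. Choosing the parameters so the former beats the latter forces the points in each cluster onto a common hypersurface of degree $\leq d$; covering the domain by $O(T^{\epsilon})$ small boxes then shows that the rational points of $Z$ of height at most $T$ lie on at most $c\,T^{\epsilon}$ hypersurfaces of degree $\leq d$, whose coefficient vectors, as the clusters vary, sweep out a semialgebraic — hence definable — parameter space.

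Finally one runs the induction on $k$: for each interpolating hypersurface $H$, decompose $Z \cap H$ into definable cells; cells of dimension $< k$ feed into the inductive hypothesis, producing finitely many definable block families of smaller dimension contained in $Z$, while a cell of dimension $k$ means $H$ contains a full-dimensional piece of $Z$, and a regularisation of such a piece together with $H$ (or an algebraic subvariety of it of the right dimension) is precisely a block. Assembling the finitely many resulting families $\cW^{(j)}$, with $\eta$ ranging over the definable space of hypersurface coefficients and cell parameters, and inheriting the bound $c\,T^{\epsilon}$ from the determinant step, gives the theorem. I expect the main obstacle to be the combination of (i) the uniform-in-families $C^r$-parametrisation, whose derivative bookkeeping through repeated cell decompositions is delicate, and (ii) upgrading "the rational points lie on few hypersurfaces" to the genuine block conclusion — showing the relevant pieces of $Z \cap H$ can be taken connected, regular, and contained in semialgebraic sets of the same dimension, and tracking this uniformly so that the blocks fall into finitely many definable block families.
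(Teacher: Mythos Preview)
The paper does not prove this theorem at all: it is quoted verbatim as \cite{pila:aomml}~Theorem~3.6 and used as a black box in the proof of \cref{blocks-symm-space}. So there is no ``paper's own proof'' to compare against.

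Your sketch is a faithful high-level outline of how the result is actually established in Pila's paper (building on the earlier Pila--Wilkie work): the o-minimal Yomdin--Gromov $C^r$-parametrisation, the Bombieri--Pila determinant/interpolation step to trap rational points on $O(T^\epsilon)$ hypersurfaces of bounded degree, and an induction on dimension in which the full-dimensional components of $Z \cap H$ furnish the blocks while lower-dimensional pieces are handled recursively. You have also correctly identified where the real work lies: getting the parametrisation uniformly in families with controlled derivatives, and the bookkeeping needed to upgrade ``rational points on few hypersurfaces'' to the genuine block statement (connectedness, regularity, containment in a semialgebraic set of the same dimension, and organising everything into finitely many definable block families). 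None of this is carried out in the present paper, and for the purposes of this paper you should simply cite the result rather than reprove it.
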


\subsection{Outline of proof}

The key step in the proof of the main theorems is \cref{blocks-symm-space}: the points of \( \Ztil \cap \Ltil \) of a given complexity are contained in subpolynomially many definable blocks, these blocks themselves contained in \( \Ztil \).
This is proved using \cref{pila-wilkie} (Pila's theorem) and the matrix height bounds of section~\ref{sec:matrix-heights}.

\begin{proposition} \label{blocks-symm-space}
Let \( Z \) be a subvariety of \( \Ag \) and \( \tilde{s} \) a point in \( \Hg \).
Let \( \epsilon > 0 \).

There is a constant \( c = c(Z, \tilde{s}, \epsilon) \) such that for every \( n \geq 1 \),
there is a collection of at most \( c n^\epsilon \) definable blocks \( W_i \subset \Ztil \) such that
the union \( \bigcup W_i \) contains all points of \( \Ztil \cap \Ltil \) of complexity~\( n \).
\end{proposition}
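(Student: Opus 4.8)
The plan is to run the Pila--Zannier strategy in the space of matrices relating isogenous points. A point $t \in \Lambda$ of complexity $n$ is reached from $s$ by an isogeny $A_s \to A_t$ of degree $n$; passing to rational homology and choosing symplectic bases, such an isogeny is related to a matrix $\gamma \in \GL_{2g}(\Q)$ whose (fractional linear) action on $\Hg$ sends $\stil$ to a lift of $t$, and after composing with a suitable element of $\Sp_{2g}(\Z)$ we may take this lift to be the point $\ttil$ of $\Ztil \cap \Ltil$ above $t$. The homology matrix of a degree-$n$ isogeny need not have bounded height, but -- and this is exactly the content of section~\ref{sec:matrix-heights} -- reduction theory forces the matrix $\gamma$ taking the fixed point $\stil$ to a point of the Siegel fundamental domain $\Fg$ lying over $t$ to satisfy $H(\gamma) \le c_1 n^{\kappa}$ for constants $c_1, \kappa$ depending only on $\stil$. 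Conversely, any $\gamma \in \GL_{2g}(\Q)$ for which $\gamma \cdot \stil$ is defined and lies in $\Hg$ produces a point whose abelian variety is isogenous to $A_s$. I therefore set
\[ Y = \{ \gamma \in \rM_{2g}(\R) : \gamma \cdot \stil \text{ is defined and lies in } \Ztil \} \subset \R^{4g^2}, \]
with the orbit map $\mu \colon Y \to \Ztil$ given by $\gamma \mapsto \gamma \cdot \stil$. The set $Y$ is definable in $\R_{\mathrm{an,exp}}$: the fractional linear action and the inequalities defining $\Fg$ are semialgebraic, and $\Ztil = \pi^{-1}(Z) \cap \Fg$ is definable because $\pi|_{\Fg}$ is definable by Peterzil--Starchenko. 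By construction the rational points of $Y$ are carried by $\mu$ into $\Ztil \cap \Ltil$, and every point of $\Ztil \cap \Ltil$ of complexity $n$ is $\mu(\gamma)$ for some rational $\gamma \in Y$ of height at most $c_1 n^{\kappa}$.

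Next I would apply \cref{pila-wilkie} to the definable set $Y \subset \R^{4g^2}$ with parameter $\epsilon/\kappa$ in place of $\epsilon$. This yields finitely many definable block families $\cW^{(1)}, \dotsc, \cW^{(J)}$ with all fibres contained in $Y$, and a constant $c_2$, such that for every $T \ge 1$ the rational points of $Y$ of height at most $T$ lie in at most $c_2 T^{\epsilon/\kappa}$ blocks of the form $\cW^{(j)}_\eta$. Taking $T = c_1 n^{\kappa}$ and using the previous paragraph, all matrices attached to points of $\Ztil \cap \Ltil$ of complexity $n$ lie in a union of at most $c_2 c_1^{\epsilon/\kappa} n^{\epsilon}$ definable blocks $B \subset Y$.

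Finally I would transport these blocks into $\Ztil$ by means of $\mu$. Since $\mu$ is a semialgebraic map and each block $B$ is, by definition, contained in a regular semialgebraic set of dimension $\dim B$, the image $\mu(B)$ is a finite union of definable blocks; moreover, because $B$ ranges only over fibres of the finitely many families $\cW^{(j)}$, the number of blocks needed is bounded by a single constant $L$ independent of $n$ (uniform finiteness in o-minimal structures, applied to a cell decomposition of $\mu$ adapted to the rank stratification of each family). Each such block lies in $\mu(Y) \subseteq \Ztil$. Collecting them over all $B$ produces at most $c\, n^\epsilon$ definable blocks $W_i \subset \Ztil$, with $c = L c_2 c_1^{\epsilon/\kappa}$, whose union contains $\mu$ of every bounded-height rational point of $Y$ used above, hence every point of $\Ztil \cap \Ltil$ of complexity $n$. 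Since $c_1, \kappa$ depend on $\stil$, the families $\cW^{(j)}$ on $Y$ and so on $Z$ and $\stil$, and the chosen parameter on $\epsilon$, the constant $c$ depends only on $Z$, $\stil$ and $\epsilon$, as required.

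The principal obstacle is this last step: pushing blocks forward through the non-injective orbit map $\mu$ from the matrix space $\R^{4g^2}$ into $\Ztil$, and controlling, uniformly in $n$, how many blocks in $\Ztil$ are needed. The genuinely arithmetic input -- the polynomial bound $H(\gamma) \le c_1 n^{\kappa}$ for the connecting matrices -- is isolated in section~\ref{sec:matrix-heights}, and once it and the definability of $\pi|_{\Fg}$ are in hand, the remainder is a formal combination with Pila's counting theorem.
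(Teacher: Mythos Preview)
Your proposal is correct and follows essentially the same approach as the paper: define $Y = \sigma^{-1}(\Ztil)$ in matrix space, use the height bound from section~\ref{sec:matrix-heights} (packaged together with the Pila--Tsimerman fundamental-domain bound as \cref{matrix-height-bound-fg}) to obtain rational preimages of height $\leq c_1 n^{\kappa}$, apply \cref{pila-wilkie} to $Y$, and push the resulting block families forward via the semialgebraic map $\sigma$ with a uniform bound on the number of image blocks per family. Your aside that \emph{all} rational points of $Y$ are carried into $\Ltil$ is superfluous (and not obviously true for arbitrary rational matrices), but the argument only uses the direction you actually need.
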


On the other hand, the Masser--Wüstholz isogeny theorem gives a polynomial lower bound for the Galois degree of points in \( \Lambda \) in terms of their complexity.
Combining these two bounds, once the complexity gets large enough there are more points in \( \Ztil \cap \Ltil \) than there are blocks to contain them.
Hence most points of \( \Ztil \cap \Ltil \) are contained in blocks of positive dimension.
In particular the union of positive-dimensional blocks contained in \( \Ztil \) has Zariski dense image in~\( Z \).

In the case \( \dim Z = 1 \) this implies that \( \Ztil \) has an algebraic irreducible component and so
we can conclude using the Ullmo--Yafaev characterisation of weakly special subvarieties.
When \( \dim Z > 1 \), we use the Ax--Lindemann theorem for \( \Ag \) to deduce that positive-dimensional weakly special subvarieties are dense in \( Z \) and then a result of Ullmo to complete the proof of \cref{main-thm-higher}.

\medskip

Let us outline the proof of \cref{blocks-symm-space}.
We cannot apply the counting theorem to \( \Ltil \subset \Ztil \) directly, because the points of \( \Ltil \) are transcendental.
Instead we construct a definable set \( Y \) and a semialgebraic map \( \sigma \colon Y \to \Ztil \) such that points of \( \Ztil \cap \Ltil \) have rational preimages in \( Y \), with heights polynomially bounded in terms of their complexity.
This idea is due to Habegger and Pila~\cite{habegger-pila:unlikely}.

Consider first the case \( \End A_s = \Z \).
This case is easier because all isogenies between \( A_s \) and any abelian variety are polarised.
In this case we let
\[ Y = \{ \gamma \in \GSp_{2g}(\R)^+ \mid \gamma.\stil \in \Ztil \}, \]
and let \( \sigma \colon Y \to \Ztil \) be the map \( \sigma(\gamma) = \gamma.\stil \).

Let \( \ttil \in \Ztil \cap \Ltil \) and \( t = \pi(\ttil) \).
Then there is an isogeny \( f \colon A_t \to A_s \) whose degree is equal to the complexity of~\( t \).
By the hypothesis \( \End A_s = \Z \) this isogeny is polarised.
Hence the rational representation of~\( f \) (explained in section~\ref{sec:matrix-heights}) gives a matrix \( \gamma \in \GSp_{2g}(\Q)^+ \) such that \( \pi(\gamma.\stil) = t \) and whose height is polynomially bounded with respect to the complexity.
We can also find \( \gamma_1 \in \Sp_{2g}(\Z) \) of polynomially bounded height such that \( \gamma_1 \gamma.\stil = \ttil \).
Hence every point in \( \Ztil \cap \Ltil \) has a rational preimage in \( Y \) of polynomially bounded height.
This is precisely what we need to apply \cref{pila-wilkie} to~\( Y \).

\medskip

If we drop the assumption \( \End A_s = \Z \) then this no longer works, because the rational representation of a non-polarised isogeny is not in \( \GSp_{2g}(\Q)^+ \).
Note that even if we assume that \( t \) is in the Hecke orbit of \( s \), so that there is some polarised isogeny \( A_s \to A_t \), the isogeny of minimum degree need not be polarised.
Thus we do not get an element of \( \GSp_{2g}(\Q)^+ \) whose height is polynomially bounded in terms of the complexity.

To avoid this problem we will take \( Y \) to be a subset of \( \GL_{2g}(\R) \) instead of \( \GSp_{2g}(\R) \).
This will allow us to carry out the same proof using the rational representation of a not-necessarily-polarised isogeny.
Of course \( \GL_{2g}(\R) \) does not act on \( \Hg \) but this does not matter:
the map
\[ \sigma\fullsmallmatrix{A}{B}{C}{D} = (A\stil + B)(C\stil + D)^{-1} \quad \text{for } A, B, C, D \in \rM_{g \times g}(\R) \]
is defined on a Zariski open subset of \( \GL_{2g}(\R) \), and we will only consider matrices in \( \GL_{2g}(\R) \) where \( \sigma \) is defined and has image in \( \Hg \).
In particular let
\[ Y = \sigma^{-1}(\Ztil). \]

\subsection{Proof of Proposition~\ref{blocks-symm-space}}

Before proving \cref{blocks-symm-space}, we need to check that every element \( t \in \Ztil \cap \Ltil \) has a rational preimage in \( Y \) whose height is polynomially bounded with respect to the complexity.
\Cref{rat-rep-height-bound} says that \( t \) has some preimage in \( \GL_{2g}(\R) \) with this property, but we need to move this into the preimage of the fundamental domain~\( \Fg \).

In the published version of this paper, \cref{matrix-height-bound-fg} is deduced directly from \cref{rat-rep-height-bound} and \cite{pila-tsimerman:ao-surfaces}~Lemma~3.2.
During his PhD studies, Gabriel Dill realised that the height bound given by \cref{rat-rep-height-bound} is not sufficient for this use of \cite{pila-tsimerman:ao-surfaces}~Lemma~3.2, because the latter requires bounds on the period matrix.
Dill gave a proof for the required bounds on the period matrix, which has been added to the arXiv version of this paper as \cref{correction:rat-rep-height-bound-plus}.

\begin{lemma} \label{matrix-height-bound-fg}
There exist constants \( c, k \) depending only on \( g \) and \( \stil \) such that:

For any \( \ttil \in \Ztil \cap \Ltil \) of complexity~\( n \), there is a rational matrix \( \gamma \in Y \) such that \( \sigma(\gamma) = \ttil \) and \( H(\gamma) \leq cn^k \).
\end{lemma}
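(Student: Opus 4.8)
The plan is to combine two ingredients: the matrix height bound \cref{rat-rep-height-bound} (which I am told is proved in section~\ref{sec:matrix-heights} and provides, for any $\ttil \in \Ztil \cap \Ltil$ of complexity $n$, a rational matrix $\delta \in \GL_{2g}(\Q)$ with $\sigma(\delta) \in \pi^{-1}(Z)$, $\pi(\sigma(\delta)) = \pi(\ttil)$, and $H(\delta) \leq c_0 n^{k_0}$ for constants depending only on $g$ and $\stil$); and the fact that the Siegel fundamental domain $\Fg$ has bounded "diameter" in a suitable sense, so that translating $\sigma(\delta)$ back into $\Fg$ costs only a polynomially-bounded factor in height.

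First I would take $\ttil \in \Ztil \cap \Ltil$ of complexity $n$, and apply \cref{rat-rep-height-bound} to obtain $\delta \in \GL_{2g}(\Q)$ with $\sigma(\delta)$ defined, lying in $\Hg$, mapping to $t = \pi(\ttil)$ under $\pi$, and with $H(\delta) \leq c_0 n^{k_0}$. Since $\pi(\sigma(\delta)) = t = \pi(\ttil)$ and $\ttil$ lies in the fundamental domain $\Fg$, there exists $\gamma_1 \in \Sp_{2g}(\Z)$ with $\gamma_1 . \sigma(\delta) = \ttil$. The key point is then to bound $H(\gamma_1)$. The idea is that $\sigma(\delta) = (A\stil + B)(C\stil + D)^{-1}$ for the block decomposition of $\delta$, and a standard estimate in Siegel reduction theory bounds the norm of the reducing element $\gamma_1 \in \Sp_{2g}(\Z)$ in terms of the point $\sigma(\delta) \in \Hg$ it reduces — specifically in terms of the imaginary part of $\sigma(\delta)$ and the Euclidean size of its real part, both of which are polynomially controlled by $H(\delta)$ and $\stil$. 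Concretely: the entries of $\sigma(\delta)$ are bounded above polynomially in $H(\delta)$, and $\Im \sigma(\delta)$ is bounded below (its determinant is bounded below) polynomially in $1/H(\delta)$; feeding this into the explicit bounds on $\Sp_{2g}(\Z)$-reduction gives $H(\gamma_1) \leq c_1 H(\delta)^{k_1}$ for constants depending only on $g$.

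Finally I would set $\gamma = \rho(\gamma_1)\,\delta$, where $\rho$ is the inclusion $\Sp_{2g} \hookrightarrow \GL_{2g}$; then $\sigma(\gamma) = \gamma_1 . \sigma(\delta) = \ttil \in \Ztil$, so $\gamma \in Y = \sigma^{-1}(\Ztil)$, and by the submultiplicativity $H(\gamma_1\gamma_2) \leq 2g\, H(\gamma_1) H(\gamma_2)$ recalled in the preamble, $H(\gamma) \leq 2g\, c_1 H(\delta)^{k_1} H(\delta) \leq c\, n^k$ with $c, k$ depending only on $g$ and $\stil$. This $\gamma$ is the required preimage.

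\textbf{Main obstacle.} The step I expect to be delicate is the bound $H(\gamma_1) \leq c_1 H(\delta)^{k_1}$ on the $\Sp_{2g}(\Z)$-element that moves $\sigma(\delta)$ into $\Fg$. This requires an effective version of Siegel's reduction theory: one must know that reducing a point $Z \in \Hg$ with $\|Z\|$ bounded above and $\det(\Im Z)$ bounded below requires only a group element of controlled size. For $g = 1$ this is the familiar continued-fraction / Gauss-reduction estimate, but for general $g$ one needs to invoke the structure of the Siegel fundamental domain (finitely many generators of the relevant stabilizers, the bounded "width" of $\Fg$, and Minkowski reduction for the imaginary part). I would either cite such an effective reduction statement or prove it by a short induction on the Minkowski-reduced basis of $\Im Z$ combined with the fact that the reducing element factors through finitely many explicit generators. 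Everything else — the multiplicativity of heights, the passage from $\delta$ to $\gamma$, the polynomial control of $\sigma(\delta)$ by $H(\delta)$ — is routine.
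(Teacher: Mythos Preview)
Your proposal is correct and follows the same route as the paper: apply \cref{rat-rep-height-bound} to get a rational matrix whose image under $\sigma$ lies over $t$, then translate into $\Fg$ by an element of $\Sp_{2g}(\Z)$, bound the height of that element, and multiply. The delicate step you flag---the polynomial bound on the $\Sp_{2g}(\Z)$-element reducing a point into $\Fg$---is precisely what the paper dispatches by citing \cite{pila-tsimerman:ao-surfaces}~Lemma~3.4, so you need not reprove effective Siegel reduction yourself.
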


\begin{proof}
Let \( t = \pi(\ttil) \).
Let \( \cB \) be a symplectic basis for \( H_1(A_s, \Z) \) with period matrix~\( \stil \).

By \cref{correction:rat-rep-height-bound-plus}, there are an isogeny \( f \colon A_s \to A_t \) and a symplectic basis~\( \cB' \) for \( H_1(A_t, \Z) \) such that the rational representation \( \gamma_1 \) of \( f \) has polynomially bounded height.

Let \( \util \) denote the period matrix for \( (A_t, \lambda_t) \) with respect to the basis~\( \cB' \).
As remarked in section~\ref{subsec:rat-reps}, \( \stil = \tp{\gamma_1}.\util \) or in other words
\[ \util = \sigma(\tp{\gamma_1^{-1}}). \]

Because \( \util \) is a period matrix for \( (A_t, \lambda_t) \), there exists \( \gamma_2 \in \Sp_{2g}(\Z) \) such that \( \gamma_2.\util = \ttil \).
According to \cref{correction:rat-rep-height-bound-plus}, \( \max(\abs{\util_{ij}}, (\det \Im \util)^{-1}) \) is bounded by a polynomial in~\( n \).
We can therefore apply \cite{pila-tsimerman:ao-surfaces}~Lemma~3.2 to conclude that the height of \( \gamma_2 \) is polynomially bounded.

Thus \( \gamma = \gamma_2 \, \tp{\gamma_1^{-1}} \) satisfies the required conditions.
\end{proof}

Now we are ready to prove \cref{blocks-symm-space}.
We simply apply \cref{pila-wilkie} to~\( Y \), using \cref{matrix-height-bound-fg} to relate heights of rational points in~\( Y \) to complexities of points in \( \Ztil \cap \Ltil \).
We then use the fact that \( \sigma \) is semialgebraic, and that the blocks in \( Y \) can be chosen uniformly from finitely many definable families, to go from \( Y \) to \( \Ztil \).

\begin{proof}[Proof of \cref{blocks-symm-space}]
The set
\[ Y = \sigma^{-1}(\pi^{-1}(Z) \cap \Fg) \]
is definable because \( \sigma \) is semialgebraic and \( \pi_{|\Fg} \) is definable by a theorem of Peterzil and Starchenko~\cite{peterzil-s:definability}.

Hence we can apply \cref{pila-wilkie} to \( Y \):
for every \( \epsilon > 0 \), there are finitely many definable block families \( \cW^{(j)}(\epsilon) \subset Y \times \R^m \) and a constant \( c_1(Y, \epsilon) \) such that for every \( T \geq 1 \),
the rational points of \( Y \) of height at most \( T \) are contained in the union of at most \( c_1 T^\epsilon \) definable blocks~\( W_i(T, \epsilon) \), taken from the families \( \cW^{(j)}(\epsilon) \).

Since \( \sigma \) is semialgebraic, the image under \( \sigma \) of a definable block in \( Y \) is a finite union of definable blocks in \( \Ztil \).
Furthermore the number of blocks in the image is uniformly bounded in each definable block family~\( \cW^{(j)}(\epsilon) \).
Hence \( \sigma(\bigcup W_i(T, \epsilon)) \)
is the union of at most \( c_2 T^\epsilon \) blocks in \( \Ztil \),
for some new constant \( c_2(Z, \stil, \epsilon) \).

But by \cref{matrix-height-bound-fg}, for suitable constants \( c, k \), every point of \( \Ztil \cap \Ltil \) of complexity~\( n \) is in \( \sigma(\bigcup W_i(cn^k, \epsilon)) \).
\end{proof}

\subsection{End of proof of Theorems~\ref{main-thm-curves} and~\ref{main-thm-higher}}

\Cref{blocks-symm-space} tells us that the points of \( \Ztil \cap \Ltil \) of complexity \( n \) are contained in fewer than \( c(\epsilon)n^\epsilon \) blocks for every \( \epsilon > 0 \).
On the other hand, the Masser--Wüstholz isogeny theorem implies that the number of such points grows at least as fast as~\( n^{1/k} \) for some constant~\( k \).
Hence most points of \( \Ztil \cap \Ltil \) are contained in a block of positive dimension.
We check that this is sufficient, with the hypothesis that \( Z \cap \Lambda \) is Zariski dense in \( Z \), to deduce that the union of positive-dimensional blocks in \( \Ztil \) has Zariski dense image in \( Z \).

\begin{proposition} \label{blocks-dense}
Let \( \Lambda_1 \) be the set of points \( t \in Z \cap \Lambda \) for which there is a positive-dimensional block \( W \subset \Ztil \) such that \( t \in \pi(W) \).

If \( Z \cap \Lambda \) is Zariski dense in \( Z \), then \( \Lambda_1 \) is Zariski dense in \( Z \).
\end{proposition}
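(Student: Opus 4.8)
`blocks-dense`The plan is to argue by contradiction, comparing an upper bound for the number of complexity-$n$ points coming from \cref{blocks-symm-space} with the lower bound for Galois degrees in an isogeny class derived from the Masser--Wüstholz isogeny theorem (\cref{isogeny-bound}). We may assume $\dim Z \geq 1$, the case $\dim Z = 0$ being trivial. First I would fix a finitely generated subfield $L \subset \C$ over which both $A_s$ and $Z$ are defined. Since an abelian variety isogenous to $A_s$ is a quotient of $A_s$ by a finite subgroup scheme contained in some torsion subgroup $A_s[N]$, which is defined over $\overline L$, and since any polarisation of an abelian variety over $\overline L$ is automatically defined over $\overline L$, every point of $\Lambda$ lies in $\Ag(\overline L)$. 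Moreover $\Lambda$, and hence $Z \cap \Lambda$, is stable under $\mathrm{Aut}(\C/L)$, and complexity is $\mathrm{Aut}(\C/L)$-invariant, since the Galois conjugate of an isogeny is an isogeny of the same degree.

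Suppose now that $Z' := \overline{\Lambda_1}$ is a proper subvariety of $Z$. After enlarging $L$ so that $Z'$ is defined over a finite extension of $L$, I would replace $Z'$ by the union $Z''$ of its finitely many $\Gal(\overline L/L)$-conjugates: then $Z''$ is a finite union of proper subvarieties of the irreducible variety $Z$, hence a proper closed subvariety; it is defined over $L$; and it still contains $\Lambda_1$. Set $\Lambda_0 := (Z \cap \Lambda) \setminus Z''(\C)$. Because $Z$ is irreducible and $Z \cap \Lambda$ is Zariski dense in $Z$ while $Z'' \subsetneq Z$, the set $\Lambda_0$ is Zariski dense in $Z$; because $Z''$ is defined over $L$, the set $\Lambda_0$ is stable under $\mathrm{Aut}(\C/L)$; and because $Z'' \supseteq \Lambda_1$, no point of $\Lambda_0$ lies in $\pi(W)$ for any positive-dimensional block $W \subset \Ztil$.

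It then remains to count. For the upper bound, fix $\epsilon > 0$ and apply \cref{blocks-symm-space} to obtain at most $c n^\epsilon$ blocks $W_i \subset \Ztil$ whose union contains the points of $\Ztil \cap \Ltil$ of complexity $n$. If $t \in \Lambda_0$ has complexity $n$, every lift of $t$ to $\Fg$ lies in $\Ztil \cap \Ltil$, hence in some $W_i$, which by the last property of $\Lambda_0$ must be $0$-dimensional, i.e.\ a single point; since distinct points of $\Lambda_0$ have disjoint, non-empty sets of lifts in $\Fg$, it follows that $\Lambda_0$ contains at most $c n^\epsilon$ points of complexity $n$. For the lower bound, a Zariski dense subset of the positive-dimensional variety $Z$ is infinite, and only finitely many points of $\Lambda$ have complexity at most $N$ for each $N$, so the complexities occurring in $\Lambda_0$ are unbounded; and for any $t \in \Lambda_0$ of complexity $n$ the orbit $\Gal(\overline L/L)\cdot t$ is contained in $\Lambda_0$, consists of points of complexity $n$, and has size $[L(t):L] \geq c' n^{1/k}$ by \cref{isogeny-bound}. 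Taking $\epsilon = 1/(2k)$ yields $c' n^{1/k} \leq c n^{1/(2k)}$ for arbitrarily large $n$, which is absurd; hence $\overline{\Lambda_1} = Z$.

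The only genuinely non-formal step is the passage from $Z' = \overline{\Lambda_1}$ to the $\Gal(\overline L/L)$-stable subvariety $Z''$: this is what allows \cref{isogeny-bound} to be applied to $\Lambda_0$ itself rather than only to $Z \cap \Lambda$, and it relies on the fact that the points of $\Lambda$ are algebraic over $L$. The block-counting and the handling of lifts to the fundamental domain are then routine given \cref{blocks-symm-space}.
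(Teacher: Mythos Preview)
Your argument follows the paper's approach closely: both compare the Galois-orbit lower bound from \cref{isogeny-bound} against the block-count upper bound from \cref{blocks-symm-space}, with the key manoeuvre being to arrange that the Zariski closure of \( \Lambda_1 \) is defined over the base field. The paper does this simply by choosing \( K \) large enough from the start that \( Z_1 = \overline{\Lambda_1} \) is defined over \( K \), and then shows directly that every \( t \in Z \cap \Lambda \) of sufficiently large complexity has a Galois conjugate in \( \Lambda_1 \), hence lies in \( Z_1 \). Your contrapositive formulation via \( \Lambda_0 \) and the passage through the Galois-stable \( Z'' \) accomplish the same thing, though the detour is unnecessary: since \( \Lambda_1 \subset \Ag(\overline L) \), its Zariski closure is already defined over some finite extension of \( L \), and you may as well absorb that extension into \( L \) and skip \( Z'' \) entirely.

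There is, however, one genuine gap. You invoke \cref{isogeny-bound} to obtain \( [L(t):L] \geq c' n^{1/k} \), but that theorem bounds the minimal isogeny degree in terms of a field of definition of the \emph{abelian variety} \( A_t \), not of the moduli point \( t \in \Ag \). A principally polarised abelian variety need not descend to its field of moduli. The paper handles this explicitly: adjoining a full level-\( 3 \) structure rigidifies the moduli problem (no nontrivial automorphisms, by \cite{milne:abelian-varieties-old} Proposition~17.5), so \( (A_t, \lambda_t) \) has a model over an extension of \( L(t) \) of degree at most \( \lvert \Sp_{2g}(\Z/3) \rvert \), and this uniformly bounded factor is absorbed into the constant \( c' \). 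Without this step your lower bound on the size of the Galois orbit of \( t \) is not justified.
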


\begin{proof}
Let \( Z_1 \) denote the Zariski closure of \( \Lambda_1 \) (we do not yet know that this is non-empty).

Let \( (A_s, \lambda_s) \) be a polarised abelian variety corresponding to the point \( s \in \Ag(\C) \), defined over a finitely generated field \( K \).
We choose \( K \) large enough that the varieties \( Z \) and~\( Z_1 \) are also defined over \( K \).

Let \( t \) be a point in \( Z \cap \Lambda \) of complexity \( n \).
The polarised abelian variety corresponding to \( t \) might not have a model over the field of moduli \( K(t) \), but it has a model \( (A_t, \lambda_t) \) over an extension \( L \) of \( K(t) \) of uniformly bounded degree.
This follows from the fact that a polarised abelian variety with full level-\( 3 \) structure has no non-trivial automorphisms (\cite{milne:abelian-varieties-old} Proposition~17.5), so is defined over its field of moduli;
and the field of moduli of a full level-\( 3 \) structure on the polarised abelian variety corresponding to \( t \) is an extension of \( K(t) \) of degree at most \( \lvert \Sp_{2g}(\Z/3) \rvert \).

By \cref{isogeny-bound},
the complexity~\( n \) is bounded above by a polynomial \( c[L:K]^k \) in \( [L:K] \), with \( c \) and~\( k \) depending only on \( A_s \) and \( K \).
Hence for a different constant \( c_1 \), we have
\[ [K(t):K] \geq c_1 n^{1/k}. \]

But all \( \Gal(\bar{K}/K) \)-conjugates of \( t \) are contained in \( Z \cap \Lambda \) and have complexity~\( n \).
By \cref{blocks-symm-space}, the preimages in \( \Fg \) of these points are contained in the union of at most \( c_2(Z, \tilde{s}, 1/2k) n^{1/2k} \) definable blocks, each of these blocks being contained in \( \Ztil \).

For large enough \( n \), we have
\[ c_1 n^{1/k} > c_2 n^{1/2k}. \]
For such \( n \), by the pigeonhole principle there is a definable block \( W \subset \Ztil \) such that \( \pi(W) \) contains at least two Galois conjugates of \( t \).
Since blocks are connected by definition, \( \dim W > 0 \).
So those conjugates of \( t \) in \( \pi(W) \) are in \( \Lambda_1 \).
Since \( Z_1 \) is defined over \( K \), it follows that \( t \) itself is also in \( Z_1 \).

In other words all points of \( Z \cap \Lambda \) of large enough complexity are in \( Z_1 \).
But this excludes only finitely many points of \( Z \cap \Lambda \).
So as \( Z \cap \Lambda \) is Zariski dense in \( Z \), we conclude that \( Z_1 = Z \).
\end{proof}

Call a subset \( W \subset \Hg \) \defterm{complex algebraic} if it is a connected component of \( W_0 \cap \Hg \) for some irreducible complex algebraic variety \( W_0 \subset \rM_{2g \times 2g}(\C) \).
Let \( \Ztil^\ca \) denote the complex algebraic part of~\( \Ztil \) --  that is, the union of positive-dimensional complex algebraic subsets of \( \Hg \) contained in \( \Ztil \).

By Lemma~2.1 of~\cite{pila:alg-to-ca}, \( \Ztil^\ca \) is the same as the union of the definable blocks contained in \( \Ztil \).
So \cref{blocks-dense} tells us that \( \pi(\Ztil^\ca) \) is Zariski dense in \( Z \).

If \( \dim Z = 1 \), then the fact that \( \Ztil^\ca \) is non-empty implies that some irreducible component of \( \Ztil \) is complex algebraic.
By~\cite{uy:characterisation} this implies that \( Z \) is weakly special, proving \cref{main-thm-curves}.

For \( \dim Z > 1 \), we use the Ax--Lindemann theorem for \( \Ag \) proved by Pila and Tsimerman~\cite{pila-tsimerman:ax-lindemann}: if \( W \) is a maximal complex algebraic subset of \( \Ztil \) then \( \pi(W) \) is weakly special.
Hence \( \pi(\Ztil^\ca) \) is a union of positive-dimensional weakly special subvarieties, so these are dense in \( Z \).
Let \( S \) be the smallest special subvariety of \( \Ag \) containing \( Z \).
By Théorème~1.3 of~\cite{ullmo:hyp-ax-lind}, we deduce that \( S = S_1 \times S_2 \) for some Shimura varieties \( S_1 \) and \( S_2 \), and \( Z = S_1 \times Z' \) for some subvariety \( Z \subset S_2 \), proving \cref{main-thm-higher}.

\section{Heights of rational representations of isogenies} \label{sec:matrix-heights}

Let \( (A, \lambda) \) and \( (A', \lambda') \) be principally polarised abelian varieties over \( \C \) related by an isogeny of degree~\( n \) (not necessarily compatible with the polarisations).
In this section we show that, for suitable choices of bases for \( H_1(A, \Z) \) and \( H_1(A', \Z) \) and of isogeny \( f \) between \( A \) and~\( A' \), the height of the rational representation of~\( f \) is polynomially bounded in \( n \).
A precise statement of this bound is given at \cref{rat-rep-height-bound}.
This is derived from \cref{heights-of-endoms}, which gives a height bound for endomorphisms of \( A \).

The height bounds from this section are used in the proof of \cref{matrix-height-bound-fg}.
Gabriel Dill discovered that \cref{rat-rep-height-bound} is not sufficient for this purpose.
The proof of \cref{matrix-height-bound-fg} also requires a bound for the period matrix of the basis of \( H_1(A', \Z) \) used in the proposition.
Dill proved such a bound by using the full strength of \cref{heights-of-endoms}.
A modified version of \cref{rat-rep-height-bound} which includes this bound for the period matrix can be found at \cref{correction:rat-rep-height-bound-plus} (\cref{correction:rat-rep-height-bound-plus} is only in the arXiv version of the paper as it was added after the paper was published).

The notation \( H(f, \cB', \cB) \) in the \lcnamecref{rat-rep-height-bound} refers to the height of the rational representation of the isogeny \( f \) with respect to bases \( \cB' \), \( \cB \) of the period lattices.
This is defined below in section~\ref{subsec:rat-reps}.

\begin{proposition} \label{rat-rep-height-bound}
Let \( (A, \lambda) \) be a principally polarised abelian variety over \( \C \) and fix a symplectic basis \( \cB \) for \( H_1(A, \Z) \).
There exist constants \( c, k \) depending only on \( (A, \lambda) \) such that:

If \( (A', \lambda') \) is any principally polarised abelian variety for which there exists an isogeny \( A \to A' \) of degree~\( n \), then
there are an isogeny \( f \colon A' \to A \) and a symplectic basis \( \cB' \) for \( H_1(A', \Z) \) such that
\[ H(f, \cB', \cB) \leq cn^k. \]
\end{proposition}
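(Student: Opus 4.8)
The plan is to pass to integral homology and turn the statement into a lattice‑theoretic one. Write $\Lambda = H_1(A,\Z)$ and $\Lambda' = H_1(A',\Z)$, with the unimodular alternating forms $E$, $E'$ induced by $\lambda$, $\lambda'$; the given basis $\cB$ is symplectic for $(\Lambda,E)$. An isogeny $A \to A'$ of degree $n$ induces on $H_1$ an injection $\Lambda \hookrightarrow \Lambda'$ whose image has index $n$, and since $\Lambda'/\Lambda$ has order $n$ it is killed by $n$; identifying $\Lambda$ with its image we obtain, inside $\Lambda \otimes \Q$, a chain $\Lambda \subseteq \Lambda' \subseteq \tfrac1n\Lambda$. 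I would then take $f \colon A' \to A$ to be the isogeny whose action on $H_1$ is multiplication by $n$ (a well‑defined map $\Lambda' \to \Lambda$ because $n\Lambda' \subseteq \Lambda$), which is $\C$‑linear, the two complex structures having been identified, and so is a genuine isogeny. For any basis $\cB'$ of $\Lambda'$ the rational representation of $f$ with respect to $\cB'$ and $\cB$ is then $nT$, where $T \in \GL_{2g}(\Q)$ is the matrix expressing $\cB'$ in terms of $\cB$; note that $T^{-1}$ is integral with $\det T^{-1} = \pm n$ and $nT = \pm\operatorname{adj}(T^{-1})$, whose entries are $(2g-1)\times(2g-1)$ minors of $T^{-1}$. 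So it is enough to choose a symplectic basis $\cB'$ of $(\Lambda',E')$ for which $H(T^{-1})$ is polynomially bounded in $n$, as then $H(nT) \le c\,n^{k}$.

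Next I would use that the symplectic bases of the unimodular symplectic lattice $(\Lambda',E')$ form a single right $\Sp_{2g}(\Z)$‑orbit. Fixing one of them, $\cB'_0$, the matrix $N_0$ expressing $\cB$ in terms of $\cB'_0$ is integral of determinant $\pm n$ (it expresses a basis of the index‑$n$ sublattice $\Lambda$), and replacing $\cB'_0$ by $\cB'_0\gamma$ with $\gamma \in \Sp_{2g}(\Z)$ replaces $N_0$ by $\gamma^{-1}N_0$. Thus the whole matter comes down to the following reduction statement: an integral $2g \times 2g$ matrix of determinant $n$ can be brought, by left multiplication by an element of $\Sp_{2g}(\Z)$, to a matrix of height at most $c(g)\,n^{c(g)}$.

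This last, purely arithmetic, step is where I expect the real difficulty to lie. Over $\GL_{2g}(\Z)$ it is immediate — Hermite normal form gives a representative with entries bounded by the determinant — but $\Sp_{2g}(\Z)$ is a proper subgroup and no full triangular reduction is available through it. One has instead to run a symplectic reduction: exploit that $\Sp_{2g}(\Z)$ acts transitively on primitive vectors (any primitive vector extends to a symplectic basis because $E$ is unimodular), repeatedly split off a short primitive vector of the relevant sublattice, and iterate while controlling the growth of the remaining coordinates — a form of reduction theory for $\Sp_{2g}(\Z)$. Alternatively one can invoke the $\GSp_{2g}$ analogue of the elementary divisor theorem, which for the pair $(\Lambda',E')$ and $\Lambda$ produces a symplectic basis $e_1,\dots,e_g,f_1,\dots,f_g$ of $(\Lambda',E')$ and positive integers $r_1 \mid \cdots \mid r_g \mid s_g \mid \cdots \mid s_1$ with $\prod_i r_i s_i = n$ such that the $r_i e_i$ and $s_i f_i$ form a basis of $\Lambda$; here all $r_i,s_i \le n$, and the remaining work is to absorb the a priori large element of $\GL_{2g}(\Z)$ relating this distinguished basis of $\Lambda$ to the fixed symplectic basis $\cB$, using that it carries $E$ to the restriction $E'|_\Lambda = \psi_{\phi^*\lambda'}$, which differs from $E$ only by a totally positive self‑adjoint endomorphism of determinant $n^2$. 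Either way, once the reduction is in hand the height of the rational representation of $f$ is bounded by $c(g)\,n^{c(g)}$, which is the assertion of the proposition.
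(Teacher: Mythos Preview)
Your reduction statement---that an arbitrary integral $2g \times 2g$ matrix of determinant $n$ can be brought to height $\leq c(g)\,n^{c(g)}$ by left multiplication by $\Sp_{2g}(\Z)$---is false, and neither of your two suggested fixes repairs it. The left coset $\Sp_{2g}(\Z) N_0$ is determined by the form $N_0^T J N_0$, which in your setup is the matrix of $E'|_\Lambda$ in the basis $\cB$; and $E'|_\Lambda = E(\,\cdot\,, q\,\cdot\,)$ where $q \in \End A$ is the symmetric positive-definite element with $h^*\lambda' = \lambda \circ q$. Every representative $M$ of this coset therefore satisfies $H(M)^2 \gtrsim H(q)$. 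Now take $A$ with $\End A$ containing a real quadratic order and $u$ a unit of infinite order: for $A' = A$, $\lambda' = \lambda$, $h = u^k$ one has $n = 1$ but $q = u^{2k}$, whose height is unbounded in $k$. For this $h$ no choice of $\cB'$ can work, and your symplectic-reduction and elementary-divisor alternatives, which only move within the same bad coset, cannot help. You note that the obstruction is governed by $q$ and that $\det q = n^2$, but a bound on $\det q$ alone does not bound $H(q)$.

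The missing step is that you must modify the \emph{isogeny}, not just the basis. Replacing $h$ by $h \circ u$ for $u \in (\End A)^\times$ replaces $q$ by $u^\dag q u$, and the substantive input (the paper's \cref{heights-of-endoms}) is a reduction-theory statement inside the semisimple algebra $\End A \otimes \Q$: for every symmetric positive-definite $q$ there is a unit $u$ with $H(u^\dag q u) \leq c(A)\,\Nm(q) = c(A)\,n^2$. This is genuinely arithmetic in $\End A$ (Borel's reduction theory for $(\End A)^\times$ acting on its symmetric cone) and is where the dependence of the constants on $(A,\lambda)$, not merely on $g$, enters. Once $q$ is controlled in this way, the symplectic-basis step you outline (the paper's \cref{symplectic-basis-bound}) goes through.
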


In this \lcnamecref{rat-rep-height-bound}, the isogeny whose existence is assumed and the isogeny whose existence is asserted in the conclusion go in opposite directions.
This is the most convenient formulation for our application, but it is not important since any isogeny \( A \to A' \) of degree~\( n \) gives rise to an isogeny in the opposite direction of degree~\( n^{2g-1} \).

\subsection{Polarisations}

Let \( A \) be an abelian variety and \( A^\vee \) its dual variety.
A \defterm{polarisation} of \( A \) is an isogeny \( A \to A^\vee \) satisfying a certain positivity condition (being associated with an ample sheaf).
Any polarisation induces a symplectic form \( H_1(A, \Z) \times H_1(A, \Z) \to \Z \).
A polarisation is \defterm{principal} if it is an isomorphism \( A \to A^\vee \).

A polarisation \( \lambda \colon A \to A^\vee \) induces an involution, called the \defterm{Rosati involution}, of \( \End A \otimes_\Z \Q \) defined by
\[ a^\dag = \lambda^{-1} \circ a^\vee \circ \lambda \]
where \( a^\vee \) means the morphism dual to \( a \).
This involution reverses the order of multiplication in \( \End A \otimes_\Z \Q \).
It gives an involution of \( \End A \) itself if \( \lambda \) is principal.

Having fixed a principal polarisation \( \lambda \) of \( A \), every other polarisation has the form \( \lambda \circ q \) for some \( q \in \End A \) which is \defterm{symmetric}, i.e.\ \( q^\dag = q \), and \defterm{positive definite}, i.e.\ each component of \( q \) in
\[ \End A \otimes_\Z \R \cong \prod \rM_{l_i}(\R) \times \prod \rM_{m_i}(\C) \times \prod \rM_{n_i}(\HH) \]
has eigenvalues which are positive real numbers.

\subsection{Rational representations} \label{subsec:rat-reps}

We define the \defterm{rational representation} of an isogeny \( f \colon A' \to A \) (with respect to bases \( \cB, \cB' \) for \( H_1(A, \Z) \) and \( H_1(A', \Z) \)) to be the matrix of the induced morphism
\[ f_* \colon H_1(A', \Z) \to H_1(A, \Z) \]
in terms of the chosen bases.
This gives a \( 2g \times 2g \) integer matrix.
We write
\[ H(f, \cB', \cB) \]
for the \defterm{height} of the rational representation of \( f \), meaning simply the maximum of the absolute values of the entries of the matrix.

Rational representations of isogenies are particularly interesting in the case that the bases \( \cB, \cB' \) are symplectic with respect to the polarisations \( \lambda, \lambda' \).
In this case, if \( \stil, \ttil \in \Hg \) are the period matrices of \( (A, \lambda) \) and \( (A', \lambda') \) with respect to the chosen bases and \( \gamma \) is the rational representation of an isogeny \( A' \to A \), then
\[ \ttil = (A\stil + B)(C\stil + D)^{-1} \text{ where } \tp{\gamma} = \fullsmallmatrix{A}{B}{C}{D}. \]
We remark also that for symplectic bases, an isogeny is polarised if and only if its rational representation is in \( \GSp_{2g}(\Q) \).

\subsection{Outline of proof}

In the situation of \cref{rat-rep-height-bound}, let \( h \colon A \to A' \) be an isogeny of degree~\( n \).
Then \( h^* \lambda' \) is a polarisation of \( A \), so there is a symmetric positive definite endomorphism \( q \in \End A \) such that
\[ h^* \lambda' = \lambda \circ q. \]

We can identify \( H_1(A', \Z) \) with a submodule of \( H_1(A, \Z) \) of index \( n^{2g-1} \), and so find a basis for \( H_1(A', \Z) \) whose height is at most \( n^{2g-1} \).
However this need not be a symplectic basis.
We apply the standard algorithm for finding a symplectic basis: the height of this new basis is controlled by \( h^* \lambda' \), in other words by \( q \).

So we would like to bound the height of the rational representation of \( q \) in terms of \( \deg h \).
However this is not possible: let \( A \) be an abelian variety whose endomorphism ring is the ring of integers \( \fo \) of a real quadratic field.  In particular \( \fo \) has infinitely many units.
Let \( h \) be a unit in \( \fo \) -- in other words, an isomorphism \( A \to A \).
If we take the same polarisation on each copy of \( A \), then \( q = h^2 \) and the rational representation of this can have arbitrarily large height.

We can avoid this by replacing \( h \) by \( h \circ u \) for some automorphism \( u \) of \( A \) -- recall that all we have supposed about \( h \) is that it is an isogeny \( A \to A' \) of degree~\( n \).
This replaces \( q \) by \( u^\dag qu \).
We will show that we can choose \( u \) so that the height of the rational representation of \( u^\dag qu \) is bounded by a multiple of~\( \deg q = n^2 \).

\subsection{Heights in the endomorphism ring}

The following \lcnamecref{heights-of-endoms} is motivated by the theorem (\cite{milne:abelian-varieties-old} Proposition~18.2) that the symmetric elements of \( \End A \) of a given norm fall into finitely many orbits under the action of \( (\End A)^\times \) given by \( (u, q) \mapsto u q u^\dag \).
In geometric terms, this says that if we fix \( A \) and \( \deg \mu \) then there are finitely many isomorphism classes of polarised abelian varieties \( (A, \mu) \).
Our \lcnamecref{heights-of-endoms} strengthens this by saying that each orbit contains an element whose height is bounded by a multiple of the norm.
Milne's theorem is proved using the reduction theory of arithmetic groups.
We also use reduction theory, but in order to get height bounds we have to go deeper into the structure of \( \End A \otimes_\Z \R \).

The representation \( \rho \) appears in the \lcnamecref{heights-of-endoms} solely to give us a convenient definition of heights and norms of elements of \( R \).
Specifically, \( H(x) \) means the height of \( \rho(x) \) and \( \Nm(x) = \det \rho(x) \) for \( x \in R \).

\begin{proposition} \label{heights-of-endoms}
Let \( (E, \dag) \) be a semisimple \( \Q \)-algebra with a positive involution, let \( R \) be a \( \dag \)-stable order in \( E \) and let \( \rho : R \to \rM_N(\Z) \) be a faithful representation of \( R \).

There is a constant \( c \) depending only on \( (R, \dag, \rho) \) such that for any symmetric positive definite \( q \in R \), there is some \( u \in R^\times \) such that
\[ H(u^\dag qu) \leq c \Nm(q). \]
\end{proposition}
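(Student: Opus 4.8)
The plan is to prove Proposition~\ref{heights-of-endoms} by combining the structure theory of semisimple algebras with positive involution with classical reduction theory, applied factor-by-factor to the real algebra \( E \otimes_\Q \R \). First I would reduce to the case where \( E \) is simple. Indeed, a \( \dag \)-stable order \( R \) in a product \( E = \prod_i E_i \) need not be a product of orders in the \( E_i \), but it is commensurable with such a product \( \prod_i R_i \); passing between \( R \) and \( \prod R_i \) changes heights and norms by bounded factors, and the involution \( \dag \) either preserves each simple factor or swaps a pair of factors. In the swapping case the positivity condition forces \( q \) to have a very rigid shape and the bound is easy; so the essential case is \( E \) simple with \( \dag \) acting on it, and \( R \) an order in \( E \).

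\medskip

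For \( E \) simple, \( E \otimes_\Q \R \) is one of \( \rM_m(\R) \), \( \rM_m(\C) \), or \( \rM_m(\HH) \) (or a product of two such, exchanged by \( \dag \), which again reduces to the easy swapping case), and the positive involution \( \dag \) becomes, in suitable coordinates, conjugate-transpose. The key point is that a symmetric positive-definite \( q \in R \) is, in \( E \otimes \R \), a positive-definite Hermitian matrix over \( \R \), \( \C \), or \( \HH \), and the action \( q \mapsto u^\dag q u \) for \( u \in R^\times \) is the action of the arithmetic group \( R^\times \) inside \( (E\otimes\R)^\times = \GL_m(\R),\GL_m(\C),\GL_m(\HH) \) on the cone of positive-definite Hermitian forms. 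By Minkowski's reduction theory for arithmetic groups, there is a fundamental domain (a Siegel set) \( \mathfrak{S} \) for the action of \( R^\times \) on this cone such that every \( q \) is \( R^\times \)-equivalent to a point \( q' = u^\dag q u \) lying in \( \mathfrak{S} \). On a Siegel set the entries of \( q' \) (hence \( H(q') = H(\rho(q')) \), up to a bounded factor coming from \( \rho \)) are bounded polynomially — in fact linearly — in terms of \( \det q' = \Nm(q') = \Nm(q) \) (the determinant is \( R^\times \)-invariant up to the bounded factor \( |\Nrd(u)|^{[\ldots]} = 1 \), since units have reduced norm a root of unity times \( \pm1 \); more carefully, \( \det \rho(u) = \pm 1 \) for \( u \in R^\times \)). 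Thus \( H(u^\dag q u) \le c\,\Nm(q) \) with \( c \) depending only on the Siegel set, hence only on \( (R, \dag, \rho) \).

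\medskip

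Concretely, the reduction-theory input I would cite or reprove is the following elementary fact about the cone \( \mathcal{P}_m \) of positive-definite real symmetric \( m \times m \) matrices: in a Siegel fundamental domain for \( \GL_m(\Z) \) acting by \( Y \mapsto \tp{g} Y g \), every matrix \( Y \) has all entries bounded by a constant multiple of \( (\det Y)^{?} \); one gets the sharpest exponent from the Iwasawa/Jacobi decomposition \( Y = \tp{n} d n \) with \( n \) unipotent of bounded entries and \( d \) diagonal with \( d_1 \le d_2 \le \cdots \le d_m \) and each \( d_i \le C d_{i+1} \), whence \( d_m \le C^{m-1} (d_1 \cdots d_m)^{1/1} \le C^{m-1} \det Y \) since \( d_i \ge 1 \) is not available — rather one uses \( d_i d_{m} \le \cdots \). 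Getting the right exponent here is a minor bookkeeping matter; the statement only asks for a linear bound, and in any case a polynomial bound would suffice for the application, but the proof naturally gives \( H \le c \Nm \). The same argument works over \( \C \) and \( \HH \) with Hermitian forms, using reduction theory for \( \GL_m(\fo_D) \) in a quaternion or CM order.

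\medskip

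\textbf{Main obstacle.} The technical heart is not the existence of a fundamental domain — that is standard — but rather (i) controlling the passage between the possibly-non-product order \( R \) and a product of orders in the simple factors, keeping track that heights change only by bounded factors, and (ii) extracting an \emph{explicit} height bound in terms of the determinant from the shape of the Siegel set, which requires descending into the real structure \( E \otimes \R \cong \prod \rM_{l_i}(\R) \times \prod \rM_{m_i}(\C) \times \prod \rM_{n_i}(\HH) \) and running Jacobi-type decompositions in each factor, rather than treating \( R^\times \) as an abstract arithmetic group. This is exactly the sense in which, as the text warns, one must "go deeper into the structure of \( \End A \otimes_\Z \R \)" than Milne's finiteness statement requires. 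I expect this factor-by-factor analysis, together with the verification that the positive-definiteness hypothesis on \( q \) is precisely what makes \( q \) land in the cone on which reduction theory applies, to be the part demanding the most care.
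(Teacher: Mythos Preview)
Your overall architecture matches the paper's: reduce to simple \(E\), invoke the Albert classification so that \(E\otimes_\Q\R\) is a product of matrix algebras over \(\R\), \(\C\), or \(\HH\) with \(\dag\) conjugate-transpose, and then apply reduction theory for the arithmetic group \(R^\times\) acting on the cone of positive Hermitian elements to replace \(q\) by a reduced \(q'=u^\dag qu\). The paper does exactly this, writing \(q=x^\dag x\) and using Borel's Siegel-set decomposition \(x=kaz\nu\gamma\) with \(\gamma\in R^\times\), so that \(q'=\nu^\dag z^\dag a^\dag az\nu\) with \(z\) in a compact set and \(\nu\) in a finite set.

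The genuine gap is precisely the step you flagged and then dismissed as ``minor bookkeeping'': bounding the largest diagonal entry \(d_m\) linearly by \(\det q'=\Nm(q)\). You correctly note that Siegel-set inequalities alone only control ratios \(d_i/d_{i+1}\), and that ``\(d_i\ge 1\) is not available'' a priori. But this lower bound is \emph{exactly} the heart of the argument, and it is not bookkeeping: without it there is no bound at all (scale \(q\) by \(\epsilon\)). The paper obtains it by exploiting the integrality of \(q'\). Since \(\nu\) ranges over a finite set \(C\), one can choose an integer \(m\) with \(m\nu^{-1}\in R\) for all \(\nu\in C\); then \(m^2 z^\dag a^\dag az=(m\nu^{-\dag})q'(m\nu^{-1})\in R\subset \rM_n(\fo)\). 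Because \(z\) is upper triangular with diagonal entries of reduced norm \(1\), the \((1,1)\)-entry of this matrix is \(m^2 z_{11}^\dag a_1 z_{11}\in\fo\), hence has reduced norm of absolute value \(\ge 1\); since \(\Nrd(z_{11})=1\), this forces \(m^2 a_1\ge 1\). The Siegel ratio bounds then propagate this to \(a_i\ge m^{-2}t^{2-2i}\) for all \(i\), and combining with \(\prod a_i\le c\,\Nm(q)\) yields \(a_j\le c'\Nm(q)\) for each \(j\). Your sketch never produces this integrality-to-lower-bound step, and your fallback suggestion that ``a polynomial bound would suffice'' does not help, because no exponent is available without it.

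Two minor remarks: the ``swapping case'' you worry about does not occur, since a positive involution on a semisimple \(\Q\)-algebra must preserve each simple factor (otherwise the trace form \(x\mapsto \mathrm{tr}(xx^\dag)\) is degenerate); and you should apply reduction theory to the \(\Q\)-group \(G=\Res_{D/\Q}\GL_n\) with \(G(\Z)=R^\times\), as the paper does, rather than to \(\GL_m(\fo_D)\), since \(R\) need not equal \(\rM_n(\fo_D)\).
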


\begin{proof}
We begin by checking that it suffices to prove the \lcnamecref{heights-of-endoms} for simple algebras~\( E \).
In general, \( E = \prod E_i \) for some simple \( \Q \)-algebras~\( E_i \).
Let \( R_i = R \cap E_i \).
Then \( R' = \prod R_i \) is an order of \( E \) contained in \( R \).
Let \( m = [R:R'] \).
Given \( q \in R \), we look at \( mq \in R' \).
Suppose that the \lcnamecref{heights-of-endoms} holds for each \( R_i \); then clearly it holds for \( R' \),
so there is \( u \in R'^\times \) (a fortiori \( u \in R^\times \)) such that
\[ H(umqu^\dag) \leq c\Nm(mu). \]
Hence the \lcnamecref{heights-of-endoms} holds for \( R \) with constant \( c\Nm(m)/m \).

So we suppose that \( E \) is simple.
Then \( E = \rM_n(D) \) for some division algebra \( D \), and the involution \( \dag \) is matrix transposition composed with some involution of \( D \).
We may also suppose that \( R \) is contained in the maximal order \( \rM_n(\fo) \), where \( \fo \) is a maximal order in \( D \).

By the Albert classification of division algebras with positive involution, \( E \otimes_\Q \R \) is isomorphic to one of \( \rM_{nd}(\R)^r \), \( \rM_{nd}(\C)^r \) or \( \rM_{nd}(\HH)^r \).
Because \( q \) is symmetric, its projection onto each simple factor of \( E \otimes_\Q \R \) is a Hermitian matrix.
By the theory of Hermitian forms over \( \R \), \( \C \) and \( \HH \),
there exist \( x, d \in E \otimes_\Q \R \) such that \( d \) is diagonal with real entries in each factor and
\[ q = x^\dag d x. \]
Since \( q \) is positive definite, all the diagonal entries of \( d \) are positive so we can multiply each row of \( x \) by the square root of the corresponding entry of \( d \) to suppose that \( d = 1 \).
We then have \( q = x^\dag x \).

Let \( G \) be the \( \Z \)-group scheme representing the functor \( \Z\text{-}\mathbf{Alg} \to \mathbf{Grp} \) given by
\[ G(A) = (R \otimes_\Z A)^\times. \]
Over \( \Q \) this is the reductive group \( \Res_{D/\Q} \GL_n \).
We will use the following notations for subgroups of~\( G \):
\begin{enumerate}[(i)]
\item \( S \) is the maximal \( \Q \)-split torus of \( G \) whose \( \Q \)-points are the diagonal matrices of \( \GL_n(D) \) with entries in \( \Q \);
\item \( P \) is the minimal parabolic \( \Q \)-subgroup of \( G \) consisting of upper triangular matrices;
\item \( U = R_u(P) \) is the group of upper triangular matrices with ones on the diagonal;
\item \( Z \) is the centraliser of \( S \) in \( G \); that is, \( Z(\Q) \) consists of the diagonal matrices in \( \GL_n(D) \);
\item \( M \) is the maximal \( \Q \)-anisotropic subgroup of \( Z \); that is, \( M(\Q) \) consists of the diagonal matrices in \( \GL_n(D) \) whose diagonal entries have reduced norm~\( \pm 1 \);
\item \( K = \{ g \in G(\R) \mid g^\dag g = 1 \} \) is a maximal compact subgroup of \( G(\R) \).
\end{enumerate}

By Proposition~13.1 of~\cite{borel:arithmetic-groups}, there exist a positive real number \( t \), a finite set \( C \subset G(\Q) \) and a compact neighbourhood \( \omega \) of \( 1 \) in \( M^0(\R)U(\R) \) such that
\[ G(\R) = K A_t \omega C G(\Z) \]
where
\[ A_t = \{ a \in S(\R) \mid a_i > 0, a_i/a_{i+1} \leq t \text{ for all } i \}. \]
We note that \( M^0(\R)U(\R) \) is the group of upper triangular matrices in \( \rM_n(D \otimes_\Q \R) \) whose diagonal entries have reduced norm~\( 1 \).

Hence we can write
\[ x = k a z \nu \gamma \]
where \( k \in K \), \( a \in A_t \), \( z \in \omega \), \( \nu \in C \) and \( \gamma \in G(\Z) = R^\times \).

Let \( u = \gamma^{-1} \) and
\[ q' = u^\dag q u. \]
In order to prove the \lcnamecref{heights-of-endoms}, it will suffice to show that \( H(q') \leq c\Nm(q) \).

Since \( k^\dag k = 1 \), and using the decomposition of \( x \), we get that
\[ q' = \nu^\dag z^\dag a^\dag a z \nu. \]
Fix some \( \Z \)-basis of \( R \).
We will show below that the (real) coordinates of \( a^\dag a \) are bounded above by a constant multiple of \( \Nm(q) \).
The coordinates of \( z \) and \( \nu \) are uniformly bounded because \( z \) is in the compact set \( \omega \) and \( \nu \) is in the finite set \( C \).
Hence the coordinates of \( q' \) in this basis are bounded by a multiple of \( \Nm(q) \),
so \( H(q') \) is likewise linearly bounded.

Let \( a^\dag a = \operatorname{diag}(a_1, \dotsc, a_n) \) with \( a_i \in \R \).
In order to show that the coordinates of \( a^\dag a \) in the chosen basis are bounded, it will suffice to show that the \( a_i \) are bounded by a multiple of \( \Nm(q) \).
We shall show that the \( a_i \) are bounded \emph{below} by a constant, and that their product \( \prod a_i \) is bounded above by a multiple of \( \Nm(q) \).
These two facts together imply that the \( a_i \) are bounded above by a multiple of \( \Nm(q) \).

Choose an integer \( m \) such that \( m\nu^{-1} \in R \) for all \( \nu \in C \).
Then
\[ m^2 z^\dag a^\dag a z = (m\nu^{\dag-1}) q' (m\nu^{-1}) \in R \]
so every entry of \( m^2 z^\dag a^\dag a z \), viewed as a matrix in \( \rM_n(D) \), is in \( \fo \).

Let \( z_{11} \) denote the upper left entry of \( z \in \rM_n(D \otimes_\Q \R) \).
Because \( z \) is upper triangular,
the upper left entry of \( m^2 z^\dag a^\dag a z \) is \( m^2 z_{11}^\dag a_1 z_{11} \).
So \( m^2 z_{11}^\dag a_1 z_{11} \in \fo \) and
\[ \abs{\Nrd_{D/\Q}(m^2 z_{11}^\dag a_1 z_{11})} \geq 1. \]
But \( \Nrd(z_{11}) = 1 \) because \( z \in \omega \), so
\[ \abs{\Nrd_{D \otimes_\Q \R/\R}(m^2 a_1)} \geq 1. \]

Since \( m^2 a_1 \) is a positive real number, \( \Nrd_{D \otimes_\Q \R/\R}(m^2 a_1) \) is just some fixed positive power of \( m^2 a_1 \) so we conclude that
\[ m^2 a_1 \geq 1. \]
From the definition of \( A_t \), it follows that \( a_i \geq m^{-2} t^{2-2i} \) for all \( i \) and we have established that the \( a_i \) are uniformly bounded below.

Hence there is a constant \( c_1 \) such that for every \( j \),
\[ a_j \leq c_1 \prod a_i. \]

Since \( \rho \) is faithful \( \dim \rho \geq n \).
Together with the fact that \( \prod a_i \) is bounded below this implies that
\[ \prod a_i \leq c_2 \left( \prod a_i \right)^{\dim \rho/n} = c_2 \Nm(a^\dag a). \]

Now \( \Nm(z) = \Nm(u) = 1 \) and \( \Nm(\nu) \) is bounded because \( \nu \) comes from a finite set,
so \( \Nm(a^\dag a) \) is bounded above by a constant multiple of \( \Nm(q) \).
Combining all this we have proved that each \( a_i \) is bounded above by a constant multiple of \( \Nm(q) \),
and as remarked above this suffices to establish the \lcnamecref{heights-of-endoms}.
\end{proof}

\subsection{Height of a symplectic basis}

We will need the following bound for the height of a symplectic basis for a symplectic free \( \Z \)-module in terms of the values of the symplectic pairing on the standard basis.
The proof is simply to apply the standard recursive algorithm for finding a symplectic basis, verifying that the new vectors introduced always have polynomially bounded heights.

\begin{lemma} \label{symplectic-basis-bound}
Let \( L = \Z^{2g} \) and let \( \{ e_1, \dotsc, e_{2g} \} \) be a basis for \( L \).
There exist constants \( c, k \) depending only on \( g \) such that:

For any perfect symplectic pairing \( \psi : L \times L \to \Z \) with
\[ N = \max_{i, j} \abs{\psi(e_i, e_j)}, \]
there exists a symplectic basis for \( (L, \psi) \) whose coordinates with respect to the basis \( \{ e_1, \dotsc, e_{2g} \} \) are at most \( cN^k \).
\end{lemma}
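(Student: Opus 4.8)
The plan is to carry out the classical recursive construction of a symplectic basis — at each stage split off one hyperbolic plane — while keeping track of heights; it is cleanest to phrase everything on the level of the Gram matrix. Write $\Psi = (\psi(e_i,e_j)) \in \rM_{2g}(\Z)$. This is alternating and, since $\psi$ is perfect, unimodular, with all entries of absolute value at most $N$. Producing a symplectic basis of $(L,\psi)$ is the same as producing $P \in \GL_{2g}(\Z)$ with $P\Psi\,\tp{P}$ equal to the standard symplectic matrix (in some ordering of the basis vectors): the rows of $P$ are then the coordinate vectors, with respect to $\{e_1,\dotsc,e_{2g}\}$, of a symplectic basis, and the quantity to be bounded is $\max_{i,j}\abs{P_{ij}}$. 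So it suffices to find such a $P$ with entries bounded by $cN^k$ for constants $c,k$ depending only on $g$.

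First I would split off a single hyperbolic plane. Because $e_1$ is primitive and $\psi$ is perfect, the functional $\psi(e_1,-)$ is a primitive element of $\Hom(L,\Z)$, so $\gcd_j \psi(e_1,e_j) = 1$. Applying the extended Euclidean algorithm to the integers $\psi(e_1,e_2),\dotsc,\psi(e_1,e_{2g})$ — repeatedly reducing a pair of entries to (their gcd, $0$) by a matrix in $\GL_2(\Z)$ whose entries stay bounded by the current maximum entry, which never exceeds $N$ — expresses the total column reduction as a product of at most $2g-2$ such matrices, yielding $Q \in \GL_{2g-1}(\Z)$ with entries bounded by a polynomial in $N$ of degree depending only on $g$, such that conjugating $\Psi$ by $\operatorname{diag}(1,Q)$ makes the first row $(0,1,0,\dotsc,0)$; by skew-symmetry the first column becomes $\tp{(0,-1,0,\dotsc,0)}$, and all entries remain polynomially bounded in $N$ (the entry bound is multiplied by roughly the square of $\max\abs{Q_{ij}}$). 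Using the $1$ in position $(1,2)$, I then clear the remaining entries of row $2$ and column $2$ by the simultaneous operations $\Psi \mapsto E\Psi\,\tp{E}$ with $E = I - \Psi_{j,2}E_{j,1}$ for $j = 3,\dotsc,2g$; a direct check shows each such operation changes only the two entries $(j,2)$ and $(2,j)$ it is meant to kill, so there is no further growth. The Gram matrix is now block-diagonal, $\fullsmallmatrix{J_1}{0}{0}{\Psi''}$ with $J_1 = \fullsmallmatrix{0}{1}{-1}{0}$ and $\Psi''$ an alternating unimodular $(2g-2)\times(2g-2)$ matrix whose entries are polynomially bounded in $N$.

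Now I would recurse on $\Psi''$. Applying the assertion of the lemma inductively (the case $g=0$ being vacuous) gives $P'' \in \GL_{2g-2}(\Z)$ putting $\Psi''$ into standard form, with entries bounded by $c(g-1)\cdot\bigl(\text{that polynomial in }N\bigr)^{k(g-1)}$, still polynomial in $N$. Composing $\operatorname{diag}(I_2,P'')$ with the two transformations from the previous paragraph, and finally with a permutation matrix that reorders the basis into the usual $\fullsmallmatrix{0}{I}{-I}{0}$ shape, produces the desired $P$. Since $P$ is a product of boundedly many matrices each with entries polynomial in $N$ of degree depending only on $g$, and $\max\abs{(AB)_{ij}} \le 2g \max\abs{A_{ij}}\max\abs{B_{ij}}$, the entries of $P$ — hence the coordinates of the resulting symplectic basis — are bounded by $cN^k$ with $c,k$ depending only on $g$.

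The only non-elementary input is the bound on the transformation matrix coming out of the extended Euclidean algorithm, which is classical. The point to watch is the bookkeeping: each level of the recursion replaces the current entry bound $B$ by a polynomial in $B$ whose degree depends on $g$, so after $g$ levels the degree has grown (roughly like $g^g$) but is still a function of $g$ alone, which is all the lemma requires. I expect this tracking of degrees through the recursion, rather than any single estimate, to be the fiddly part of a complete write-up.
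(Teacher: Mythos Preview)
Your proposal is correct and follows essentially the same approach as the paper: both carry out the standard recursive construction of a symplectic basis by splitting off one hyperbolic plane at a time, tracking heights through the recursion. The paper works directly with basis vectors (setting $e'_1=e_1$, finding $e'_2$ via a single Bezout combination with coefficients bounded by $N$, then orthogonalising $e_3,\dotsc,e_{2g}$), while you phrase the identical computation in terms of the Gram matrix and row/column operations; your use of iterated $\GL_2(\Z)$ steps in place of a single multivariable Bezout bound is a cosmetic difference, and your explicit check that the clearing operations $E=I-\Psi_{j,2}E_{j,1}$ cause no further entry growth corresponds exactly to the paper's orthogonalisation step.
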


\begin{proof}
For any \( x \in L \), we write \( H(x) \) for the maximum of the absolute values of the coordinates of \( x \) with respect to the basis \( \{ e_1, \dotsc, e_{2g} \} \).

First let \( e'_1 = e_1 \) and choose \( e'_2 \) such that \( \psi(e'_1, e'_2) = 1 \) and \( H(e'_2) \leq N \).
We can do this because \( \psi \) is perfect, so that \( \gcd_{i=2}^n(\psi(e_1, e_i)) = 1 \).
Hence there are integers~\( a_i \) such that \( \abs{a_i} \leq N \) and
\[ \sum a_i \psi(e_1, e_i) = 1 \]
We let \( e'_2 = \sum a_i e_i \).

Then find \( e'_3, \dotsc, e'_{2g} \) orthogonal to \( e'_1 \) and to \( e'_2 \) such that \( \{ e'_1, \dotsc, e'_{2g} \} \) is a basis for \( L \) and \( H(e'_i) \leq 2gN^2 \).
We can do this by setting
\[ e'_i = e_i + \psi(e'_2, e_i) e'_1 + \psi(e'_1, e_i) e'_2. \]
Here we have \( \abs{\psi(e'_2, e_i)} \leq \sum_{j=2}^n \abs{a_j \psi(e_j, e_i)} \leq (2g-1)N^2 \)
and \( \psi(e'_1, e_i) e'_2 \) has height at most \( N^2 \) so \( H(e'_i) \leq 2gN^2 \).

Finally apply the algorithm recursively to \( L' = \Z\langle e'_3, \dotsc, e'_{2g} \rangle \).
We have
\[  \abs{\psi(e'_i, e'_j)} \leq gNH(e'_i)H(e'_j) \leq 4g^3N^5. \]
Hence by induction \( L' \) has a symplectic basis whose coordinates with respect to \( \{ e'_3, \dotsc, e'_{2g} \} \) are bounded by a constant multiple of \( N^{5k'} \), where \( k' \) is the exponent in the \lcnamecref{symplectic-basis-bound} for \( \Z^{2(g - 1)} \).
Converting these into coordinates with respect to \( \{ e_1, \dotsc, e_{2g} \} \), we get that the elements of this symplectic basis for \( L' \) have height bounded by a constant multiple of \( N^{2+5k'} \).
This proves the \lcnamecref{symplectic-basis-bound}.

We remark that the recurrence \( k(g) = 2+5k(g-1), \, k(0) = 0 \) is satisfied by \( k(g) = (5^g - 1)/2 \), so this provides a suitable choice of exponent for the lemma.
\end{proof}

\subsection{Proof of Proposition~\ref{rat-rep-height-bound}}
Let \( h \colon A \to A' \) be an isogeny of degree~\( n \).
There is \( q \in \End A \) such that
\[ h^* \lambda' = \lambda \circ q. \]
Apply \cref{heights-of-endoms} to get \( u \in (\End A)^\times \) such that
\[ H(u^\dag qu) \leq c \Nm(q). \]
Then \( hu \) is an isogeny \( A \to A' \) of degree~\( n \), so there is also an isogeny \( f \colon A' \to A \) of degree~\( n^{2g-1} \) such that
\[hu \circ f = [n]_{A'}. \]

The image of \( f_* \colon H_1(A', \Z) \to H_1(A, \Z) \) is a submodule of index \( n^{2g-1} \).
By the structure theory of finitely generated \( \Z \)-modules there is a basis \( \{ e'_1, \dotsc, e'_{2g} \} \) for \( H_1(A', \Z) \) with respect to which the rational representation of \( f \) is upper triangular and has height at most \( n^{2g-1} \).
But this need not be a symplectic basis.

Let \( \psi, \psi' \) be the symplectic forms on \( H_1(A, \Z) \) and \( H_1(A', \Z) \) induced by \( \lambda, \lambda' \) respectively.
Let \( q' = u^\dag qu \).  Then
\[ n^2 \lambda' = [n]_{A'}^* \lambda' = f^*u^*h^*\lambda' = f^*(\lambda \circ q').  \]
In terms of symplectic forms this says that
\[ n^2 \psi'(x, y) = \psi(f_* x, q'_* f_* y). \]

In particular, since the coordinates (with respect to \( \cB \), a symplectic basis for \( \psi \)) of \( \{ f_* e'_1, \dotsc, f_* e'_{2g} \} \) and the entries of the matrix \( q'_* \) are bounded by a polynomial in~\( n \), the same is true for
\[ \abs{\psi'(e'_i, e'_j)}. \]
Hence by \cref{symplectic-basis-bound} there is a symplectic basis \( \cB' \) for \( H_1(A', \Z) \) whose coordinates with respect to \( \{ e'_1, \dotsc, e'_{2g} \} \) are polynomially bounded.
Using again that the coordinates with respect to \( \cB \) of \( \{ f_* e'_1, \dotsc, f_* e'_{2g} \} \) are polynomially bounded, we deduce that \( H(f, \cB', \cB) \) is also polynomially bounded.
\qed

\renewcommand{\thesubsection}{\thesection.A}
\renewcommand{\thelemma}{\thesection.A}

\subsection{A bound for a period matrix}

In the published version of this paper, \cref{rat-rep-height-bound} was used to prove \cref{matrix-height-bound-fg}.
Gabriel Dill discovered during his PhD studies that the height bound of \cref{rat-rep-height-bound} is insufficient for this purpose: it is also necessary to bound the period matrix of the basis~\( \cB' \) which appears in the proposition.
This additional bound is given in \cref{correction:rat-rep-height-bound-plus} below, which was added to the arXiv version of the paper in~2016.

Note that the isogeny \( f \) in the conclusion of \cref{rat-rep-height-bound} is from \( A' \) to~\( A \), while the isogeny~\( f \) in \cref{correction:rat-rep-height-bound-plus} is from \( A \) to~\( A' \).
This change of direction is just a matter of convenience.

The proof of conclusion~(i) of \cref{correction:rat-rep-height-bound-plus} is essentially the same as the proof of \cref{rat-rep-height-bound}, apart from the minor changes due to the isogeny~\( f \) going in the opposite direction.
Conclusion~(ii) is read off directly from \cref{heights-of-endoms}.
The new part is conclusion~(iii), whose proof was supplied by Dill.

\begin{proposition} \label{correction:rat-rep-height-bound-plus}
Let \( (A, \lambda) \) be a principally polarised abelian variety over~\( \C \) and fix a symplectic basis~\( \cB \) for \( H_1(A, \Z) \).
There exist constants \( c \), \( k \) depending only on~\( (A, \lambda) \) such that:
If \( (A', \lambda') \) is any principally polarised abelian variety for which there exists an isogeny \( A \to A' \) of degree~\( n \),
then there are an isogeny \( f \colon A \to A' \) and a symplectic basis \( \cB' \) for \( H_1(A', \Z) \) such that
\begin{enumerate}[(i)]
\item \( H(f, \cB, \cB') \leq cn^k \);
\item if \( q \) is the endomorphism of~\( A \) such that \( f^* \lambda' = \lambda q \), then \( H(q, \cB, \cB) \leq cn^k \); and
\item if \( \tau \) is the period matrix of \( (A', \lambda') \) with respect to \( \cB' \), then
\[ \max(\abs{\tau_{ij}}, (\det \Im \tau)^{-1}) \leq cn^k. \]
\end{enumerate}
\end{proposition}

\begin{proof}
Let \( h \colon A \to A' \) be an isogeny of degree~\( n \), as given by the hypothesis of the proposition.
There is \( r \in \End A \) such that
\[ h^* \lambda' = \lambda \circ r. \]
Apply \cref{heights-of-endoms} to get \( u \in (\End A)^\times \) such that
\[ H(u^\dag r u) \leq c . \deg r = c n^2. \]
Letting \( f = hu \) and \( q = u^\dag r u \), we see that conclusion~(ii) of the proposition holds.

\medskip

The image of \( f_* \colon H_1(A, \Z) \to H_1(A', \Z) \) is a submodule of index~\( n \).
By the structure theory of finitely generated \( \Z \)-modules, there is a basis \( \cB'_1 \) for \( H_1(A', \Z) \) with respect to which the rational representation of~\( f \) is upper triangular and has height at most~\( n \).
But this need not be a symplectic basis.

Let \( \psi \), \( \psi' \) be the symplectic forms on \( H_1(A, \Z) \) and \( H_1(A', \Z) \) induced by \( \lambda \), \( \lambda' \) respectively.
In terms of symplectic forms, the fact that \( f^* \lambda' = \lambda q \) translates into
\[ \psi'(f_* x, f_* y) = \psi(x, q_* y) \text{ for } x, y \in H_1(A, \Z). \]
In particular, since \( H(q, \cB, \cB) \) is bounded by a polynomial in~\( n \) and noting that the values of \( \psi \) on~\( \cB \) are constant, we conclude that the values of \( \psi'(f_* x, f_* y) \) for \( x, y \in \cB \) are polynomially bounded.

The coordinates of elements of~\( \cB'_1 \) with respect to the basis \( \{ f_* x : x \in \cB \} \) are given by the inverse of the rational representation of~\( f \) (with respect to \( \cB \), \( \cB'_1 \)) and therefore have polynomially bounded height.
It follows that the values of \( \psi' \) on \( \cB'_1 \) are polynomially bounded.
Hence by \cref{symplectic-basis-bound}, there exists a symplectic basis~\( \cB' \) for \( H_1(A', \Z) \) whose coordinates with respect to \( \cB_1' \) are polynomially bounded.

Because \( H(f, \cB, \cB_1') \leq n \) and the coordinates of \( \cB' \) with respect to \( \cB_1' \) are polynomially bounded, it follows that \( H(f, \cB, \cB') \) is also polynomially bounded or in other words, conclusion~(i) of the proposition holds.

\medskip

In order to prove conclusion~(iii) of the proposition, we look at the symmetric bilinear forms \( \Phi \), \( \Phi' \) on \( H_1(A, \R) \) and \( H_1(A', \R) \) respectively, which are the real parts of the Hermitian forms induced by the polarisations \( \lambda \),~\( \lambda' \).
Just as for the symplectic forms, these satisfy the relation
\[ \Phi'(f_* x, f_* y) = \Phi(x, q_* y) \text{ for } x, y \in H_1(A, \R). \]
Because \( H(q, \cB, \cB) \) and \( H(f, \cB, \cB') \) are polynomially bounded and because the values of~\( \Phi \) on \( \cB \) are constant, we can deduce that the values of \( \Phi' \) on \( \cB' \) are bounded by a polynomial in~\( n \).

Let \( \tau = X + iY \) with \( X, Y \in \rM_{g \times g}(\R) \).
Using the definition of \( \tau \) as the period matrix of \( \cB' \), a calculation shows that the matrix of \( \Phi' \) with respect to the basis~\( \cB' \) is given by
\[ M = \begin{pmatrix}   XY^{-1}X + Y  &  XY^{-1}   \\   Y^{-1}X  &  Y^{-1}   \end{pmatrix}. \]

We have shown that the entries of~\( M \) are bounded by a polynomial in~\( n \).
Using the bottom right quadrant of~\( M \), we deduce that the entries of \( Y^{-1} \) are polynomially bounded.
It follows that \( \det Y^{-1} \) is polynomially bounded, which is part of conclusion~(iii).

Using the top left quadrant of~\( M \), we conclude that the entries of \( XY^{-1}X + Y \) are polynomially bounded, and hence \( \det(XY^{-1}X + Y) \) is polynomially bounded.
By Minkowski's determinant inequality (\cite{marcus-minc} section~II.4.1.8), using the fact that \( XY^{-1}X  \) and \( Y \) are both symmetric positive semi-definite matrices, we have
\[ \det Y \leq \det(XY^{-1}X + Y) \]
and hence \( \det Y \) is polynomially bounded.

Because entries of~\( Y^{-1} \) and \( \det Y \) are polynomially bounded, we deduce that entries of~\( Y \) are polynomially bounded.

Using the top right quadrant of~\( M \), we see that the entries of \( XY^{-1} \) are polynomially bounded.
Because we have shown that entries of \( Y \) are polynomially bounded, the same also holds for entries of~\( X \).

Combining the bounds for entries of \( X \) and~\( Y \), we conclude that entries of \( \tau \) are polynomially bounded.
Together with the fact that \( \det Y^{-1} \) is polynomially bounded, this proves conclusion~(iii) of the proposition.
\end{proof}

\renewcommand{\thesubsection}{\thesection.\arabic{subsection}}
\renewcommand{\thelemma}{\thesection.\arabic{lemma}}

\section{Isogeny theorem over finitely generated fields} \label{sec:isog-fg-fields}

The Masser--Wüstholz isogeny theorem~\cite{mw:isogeny-avs} gives a bound for the minimum degree of an isogeny between two abelian varieties over number fields, as a function of one of the varieties and the degree of their joint field of definition.
In order to prove \cref{main-thm-curves,main-thm-higher} for points \( s \in \Ag \) defined over \( \C \) and not merely over \( \bar{\Q} \), we need to extend the isogeny theorem to abelian varieties defined over finitely generated fields of characteristic~\( 0 \).
We will do this by a specialisation argument, using the fact that any abelian scheme has a closed fibre in which the specialisation map of endomorphism rings is surjective.
The proof is based on Raynaud's proof~\cite{raynaud:manin-mumford} that the Manin--Mumford conjecture over~\( \bar{\Q} \) implies the conjecture over~\( \C \).

A key feature of the theorem of Masser and Wüstholz is the explicit dependence of the bound on the abelian variety \( A \), via the Faltings height.
Our theorem does not make this explicit, and it is not apparent that there is any analogy of the Faltings height over a finitely generated field which would enable it to be made explicit.
Instead what matters to us is the dependence on the field of definition of~\( B \).

\begin{theorem} \label{isogeny-bound}
Let \( K \) be a finitely generated field of characteristic \( 0 \) and \( A \) an abelian variety defined over \( K \).
There exist constants \( c(A, K) \) and \( \kappa \) (\( \kappa \) depending only on \( \dim A \)) such that:

If \( B \) is any abelian variety defined over a finite extension \( L \) of \( K \) and isogenous over \( \bar{K} \) to \( A \),
then there exists an isogeny \( A \to B \) defined over \( \bar{K} \) of degree at most
\[ c(A, K) [L:K]^\kappa. \]
\end{theorem}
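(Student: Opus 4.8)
\medskip

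The plan is to deduce \cref{isogeny-bound} from the number field case~\cite{mw:isogeny-avs} by a specialisation argument in the style of Raynaud. First spread $A$ out: choose a smooth integral affine variety $V$ over $\Q$ with function field $K$ such that $A$, together with a polarisation, extends to an abelian scheme $\mathcal A \to V$ (shrinking $V$ if necessary). Using the fact that some closed fibre has surjective specialisation of endomorphisms, fix a closed point $v_0 \in V$ at which the specialisation map $\End(\mathcal A_{\bar K}) \otimes \Q \to \End(\mathcal A_{\bar v_0}) \otimes \Q$ is an isomorphism. Write $k_0 = \kappa(v_0)$ (a number field), $A_0 = \mathcal A_{v_0}$, and $h_0$ for the Faltings height of $A_0$; all of these depend only on $A/K$ and the chosen model, which we fix once and for all.

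Now let $B$ be an abelian variety over a finite extension $L/K$ with $B_{\bar K}$ isogenous to $A_{\bar K}$. Since $A$ has good reduction at $v_0$ and isogenous abelian varieties have the same $\ell$-adic inertia representation up to finite index, $B$ has potential good reduction at every place of $L$ over $v_0$. By the criterion for semistable reduction via $3$-torsion, $B$ acquires semistable — hence, combined with potential good reduction, good — reduction at such places over $L' := L(B[3])$, with $[L':L] \le \abs{\GL_{2g}(\Z/3)}$ depending only on $g = \dim A$. Fix a place $w'$ of $L'$ over $v_0$; its residue field $\ell_0$ is a number field, the reduction $B_0$ of $B$ there is an abelian variety over $\ell_0$, and
\[ [\ell_0:\Q] \le \abs{\GL_{2g}(\Z/3)}\,[k_0:\Q]\,[L:K]. \]
Extending a $\bar K$-isogeny $A_{\bar K} \to B_{\bar K}$ over the abelian scheme models, over a suitable finite cover of $V$, and specialising above $v_0$ shows $A_0 \sim B_0$ over $\bar\Q$. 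Applying~\cite{mw:isogeny-avs} to $A_0, B_0$ yields an isogeny $\psi_0 \colon A_0 \to B_0$ over $\bar\Q$ with $\deg \psi_0 \le c(g, h_0)\,[\ell_0:\Q]^{\kappa} \le c(A,K)\,[L:K]^{\kappa}$, where $\kappa$ depends only on $g$.

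It remains to lift $\psi_0$ to an isogeny $A_{\bar K} \to B_{\bar K}$ of the same degree. Let $\phi \colon A_{\bar K} \to B_{\bar K}$ be any isogeny; spreading out, it specialises to an isogeny $\phi_0 \colon A_0 \to B_0$ over $\bar\Q$, so $\eta_0 := \psi_0 \circ \phi_0^{-1} \in (\End(B_0) \otimes \Q)^\times$. Transporting the isomorphism $\End(\mathcal A_{\bar K}) \otimes \Q \xrightarrow{\sim} \End(A_0) \otimes \Q$ along $\phi$ shows that the specialisation map $\End(B_{\bar K}) \otimes \Q \to \End(B_0) \otimes \Q$ is also an isomorphism, so $\eta_0$ lifts to $\eta \in (\End(B_{\bar K}) \otimes \Q)^\times$; put $\psi := \eta \circ \phi$, a quasi-isogeny $A_{\bar K} \to B_{\bar K}$ whose specialisation is $\psi_0$. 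Because torsion subgroup schemes of abelian schemes are finite étale and specialise isomorphically, the property that $\psi$ has no denominators, so is a genuine homomorphism, and the value $\deg \psi$ are both detected on the special fibre, where they hold since $\psi_0$ is an isogeny of degree $\deg \psi_0$. Hence $\psi$ is an isogeny $A_{\bar K} \to B_{\bar K}$ of degree at most $c(A,K)\,[L:K]^{\kappa}$.

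The main obstacle is making the specialisation genuinely uniform. The decisive point is that $v_0$ must be chosen before $B$ is given: this is legitimate precisely because $A$ has good reduction everywhere on $V$, which forces $B$ to have potential good reduction above $v_0$, so that only the bounded base change to $L(B[3])$ — rather than a choice of $v_0$ adapted to $B$ — is needed to reach good reduction. The second delicate point is that the lifted $\psi$ has degree \emph{exactly} $\deg\psi_0$ and not merely a bounded multiple of it; this uses the finite étaleness of torsion in an essential way. The remaining work — passing to common finite covers of $V$ over which $\phi$, the reduction models and $\eta$ are all defined, and checking that the various reductions above $v_0$ are compatible — is routine once this structure is in place.
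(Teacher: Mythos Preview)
Your proposal is correct and follows essentially the same strategy as the paper: specialise at a closed point $v_0$ where endomorphisms specialise isomorphically (Noot), apply Masser--W\"ustholz over the residue number field, and lift the resulting isogeny back to $\bar K$ using the endomorphism isomorphism together with the fact that specialisation of $\Hom$ is injective with saturated image. The only technical divergence is in producing an abelian-scheme model of $B$ above $v_0$: the paper does this more directly by first passing to a bounded extension over which all of $\Hom(A,B)$ is defined and then setting $\mathcal B=\mathcal A/\mathcal N$ for $\mathcal N$ the spread-out kernel of a chosen isogeny, whereas you reach the same conclusion via potential good reduction plus the $B[3]$ semistable-reduction trick; both routes incur only a field extension of degree bounded in terms of $g$ and give the same bound.
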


In Masser and Wüstholz's theorem, the constant \( c \) depended also on the degrees of polarisations of \( A \) and \( B \).
This dependence has been eliminated by Gaudron and Rémond~\cite{gaudron-remond:isogenies} who also showed that we can take \( \kappa = 2^{10} (\dim A)^3 + \epsilon \) for the exponent.

\begin{proof}
Let \( R \) be a finitely generated normal \( \Q \)-algebra whose field of fractions is~\( K \),
and let \( S = \Spec R \).
There is an abelian scheme \( \cA \) over some open subset \( U \subset S \) whose generic fibre is isomorphic to~\( A \).
(Note that \( R \) is a finitely generated \( \Q \)-algebra, not a finitely generated \( \Z \)-algebra, because unlike in \cite{raynaud:manin-mumford} we do not need to reduce modulo \( p \), while Noot's specialisation result requires the base to be a variety over~\( \Q \).)

By replacing \( L \) by a larger extension of bounded degree (the bound depending only on \( \dim A \)), we may assume that all homomorphisms \( A \to B \) are defined over~\( L \) (\cite{mw:periods} Lemma~2.1).
Let \( R' \) be the integral closure of \( R \) in \( L \) and \( S' = \Spec R' \).  Let \( \pi : S' \to S \) be the obvious finite morphism and let \( U' = \pi^{-1}(U) \).

Because \( A \) and \( B \) are isogenous, there is an abelian scheme \( \cB \) over~\( U' \) with generic fibre isomorphic to \( B \), and such that \( \cB \) is isogenous to \( \cA \).
We can construct this as follows: let \( N \) be the kernel of an isogeny \( A \to B \).
We can extend \( N \) to a finite flat subgroup scheme \( \cN \subset \cA \).
Then let \( \cB \) be the quotient \( \cA/\cN \).

For any closed points \( s' \in U' \) and \( s = \pi(s') \in U \), the fibres \( \cA_s \) and \( \cB_{s'} \) are abelian varieties over the number fields \( k_s \) and \( k_{s'} \), isogenous over \( k_{s'} \).
We can apply the Masser--Wüstholz theorem to deduce that there are constants \( c(\cA_s, k_s) \) and \( \kappa(\dim A) \) and an isogeny \( \cA_s \to \cB_{s'} \) of degree at most
\[ c(\cA_s, k_s) [k_{s'} : k_s]^\kappa. \]
Observe that \( [k_{s'} : k_s] \leq [L:K] \).

In order to prove the theorem, all we have to do is show that this isogeny \( \cA_s \to \cB_{s'} \) lifts to an isogeny \( A \to B \) (which will have the same degree).
Hence it will suffice to show that there is some closed point \( s \) such that the specialisation map
\[ \Hom_{\bar{K}}(A, B) \to \Hom_{\bar{k_s}}(\cA_s, \cB_{s'}) \tag{*} \label{eq:hom-specialisation-map} \]
is surjective.
Because we want a bound which depends only on \( A \) and not on \( B \), we have to show that there is a single point \( s \in U \) which will work for all \( B \).

We choose a closed point \( s \in U \) such that \( \End_{\bar{K}} A \to \End_{\bar{k_s}} \cA_s \) is surjective.  Such an \( s \) exists by \cite{noot:ordinary} Corollary~1.5 (this is proved using the Hilbert irreducibility theorem).

Let \( f_s \) be a \( \bar{k_s} \)-homomorphism \( \cA_s \to \cB_{s'} \).
To prove that \eqref{eq:hom-specialisation-map} is surjective,
we have to show that \( f_s \) lifts to \( \bar{K} \)-homomorphism \( A \to B \).

We are assuming that \( A \) and \( B \) are isogenous.
Choose any isogeny \( g_\eta : A \to B \) and let \( g_s \) be its specialisation at \( s \).
Let
\[ \alpha_s = g_s^{-1} \circ f_s \in \End_{\bar{k_s}} \cA_s \otimes_\Z \Q. \]
By our choice of \( s \), this lifts to some \( \alpha_\eta \in \End_{\bar{K}} A \otimes_\Z \Q \).
Then \( f_\eta = g_\eta \circ \alpha_\eta \) is a quasi-isogeny \( A \to B \) specialising to \( f_s \).

All we have to do is check that \( f_\eta \) is an isogeny and not just a quasi-isogeny.
Choose an integer \( m \) such that \( mf_\eta \) is an isogeny.
The kernel of \( mf_s \) contains \( \cA_s[m] \) so lifting to the generic fibre, the kernel of \( mf_\eta \) contains \( A[m] \).
Hence \( mf_\eta \) factorises as \( f'_\eta \circ [m] \) for an isogeny \( f'_\eta : A \to B \), and we must have \( f'_\eta = f_\eta \).
\end{proof}

\bibliographystyle{amsalpha}
\bibliography{andre-pink2}

\end{document}